\def\wid{\operatorname{wd}}
\def\rk#1{\hbox{\rm rank}\,(#1)}
\definecolor{red}{rgb}{1.00,0.00,0.00}
\newtheorem{theorem}{Theorem}[section]
\newtheorem{lemma}[theorem]{Lemma}
\newtheorem{corollary}[theorem]{Corollary}
\newtheorem{proposition}[theorem]{Proposition}
\newtheorem{conjecture}[theorem]{Conjecture}
\newtheorem{remark}[theorem]{Remark}
\newtheorem*{acknowledgment*}{Acknowledgment}
\def\RM{m,n}{k[S]}
\def\N{\mathbb{N}}
\def\Z{\mathbb{Z}}
\def\E{\mathbf{E}}
\def\b{{\bf b}}
\def\rk#1{\hbox{\rm rank}\,(#1)}
\def\N{\mathbb{N}}
\def\Z{\mathbb{Z}}
\newcommand{\D}{\Delta}
\renewcommand{\l}{\lambda}
\newcommand{\bs}{\setminus}
\newcommand{\q}{\quad}
\newcommand{\la}{\langle}
\newcommand{\ra}{\rangle}
\renewcommand{\ll}{\left\lfloor}
\newcommand{\rr}{\right\rfloor}
\newcommand*{\SM}[1]{S^e(#1)}
\newcommand*{\IM}[1]{I^e(#1)}
\newcommand*{\RM}[1]{R^e(#1)}
\newcommand*{\JM}[1]{\mathcal{J}_e(#1)}
\newcommand{\PF}{\text{PF}}
\renewcommand{\b}{\beta_{1,\l}}
\newcommand{\I}{\mathcal I}
\begin{document}
\title{Numerical Semigroups of Sally Type - II}

\author{Kriti Goel}
\address{University of Missouri, USA}
\email{kritigoel.maths@gmail.com}
\author{N\.{i}l \c{S}ah\.{i}n}
\address{Department of Industrial Engineering, Bilkent University, Ankara, 06800 Turkey}
\email{nilsahin@bilkent.edu.tr}
\author{Srishti Singh}
\address{University of Missouri, USA}
\email{spkdq@umsystem.edu}
\author{Hema Srinivasan}
\address{University of Missouri, USA}
\email{srinivasanh@missouri.edu}
%\thanks{}
\subjclass[2000]{}
\keywords{Symmetric numerical semigroup; Gorenstein; Betti Numbers }

%\date{\today}

\keywords{Sally type semigroups, Betti Numbers, Frobenius numbers, symmetric and almost symmetric semigroups.}
\thanks{2020 {\em Mathematics Subject Classification}. Primary 13D02, 13D05; Secondary 20M14, 13H10}
  
\begin{abstract}
In this paper we study numerical semigroups of Sally type of multiplicity $e$ and embedding dimension $\nu \ge e-2$.  We construct the minimal resolutions for these semigroup rings when they are symmetric and compute their Betti numbers.  We also construct a minimal resolution for another special class of such semigroups of type $\nu-1$. Finally, we propose some conjectures for the Betti numbers of families of non-symmetric Sally type semigroups in the above cases in relation to those of the corresponding Gorenstein cases of Sally type semigroups. 
\end{abstract} 

\maketitle
\section{Introduction}
A numerical semigroup $S$ is called \textit{Sally type} if its multiplicity is one more than its width.  The study of Sally type numerical semigroups is inspired by a remarkable result of J. Sally \cite{Sally} where she introduces one such numerical semigroup as an example of a Gorenstein ideal of multiplicity two more than embedding dimension whose associated graded ring is Cohen-Macaulay. Thus, she establishes, for a general Gorenstein local ring of dimension $d$ and multiplicity $e$, the associated graded ring is guaranteed to be Cohen-Macaulay only if the embedding dimension is $d,$ or $d+1,$ or $e+d-2,$ or $e+d-3$.

Let $S=\langle s_1,s_2,\ldots,s_g \rangle=\{\sum_{i=1}^{g}u_is_i \mid u_i\in \mathbb{N}\}$  be a numerical semigroup, where $s_1<s_2<\cdots<s_g$ is a sequence of positive integers with $\gcd(s_1,s_2,\hdots,s_g)=1$. Let $k$ be a field, and $A = k[X_1, X_2, \dots, X_g] $ the polynomial ring in $g$ variables.   Consider the $k$-algebra homomorphism $ \phi: A \to k[S]$  defined by
$\phi(X_i) = t^{s_i}$, where  $k[S]=k[t^{s_1},\hdots,t^{s_g}]$ is the semigroup ring of $S$. The kernel of this homomorphism, denoted by $I_S,$ is the defining ideal of the affine monomial curve $C_S$ with parameterization $$X_1 = t^{s_1} , X_2=t^{s_2} , \ldots , X_g =t^{s_g}.$$

When $s_1,s_2,\ldots, s_g$ is a minimal set of generators for $S$, the difference $\wid(S) = s_g-s_1$ is the width of $S$, while $s_1$ is its multiplicity and $g$ is its embedding dimension. Let $R_S = k[[t^{s_1} , \ldots, t^{s_g} ]]$ be the local ring with the maximal ideal $\mathfrak{m} = \langle t^{s_1}, \ldots,t^{s_g} \rangle$. Then $gr_{\mathfrak{m}}(R_S )=\bigoplus_{i \ge 0} 
\mathfrak{m}_i/\mathfrak{m}_{i+1}\cong A/I^*_S$ is the associated graded ring where $I^*_S=\langle f^* \mid f\in I_S \rangle$ with $f^*$ denoting the initial form of $f$.

Let $[t,s]$ denote the set of non-negative integers $i$ with $t\le i\le s$. $\SM{m}$ denotes the Sally type numerical semigroup minimally generated by $e+i$ with $i\in [0,e-1]\setminus \{m\}$, and $\SM{m,n}$ denotes the Sally type numerical semigroup minimally generated by $e+i, i \in [0,e-1] \bs \{m,n\}$.  In \cite{DGS3},  Sally type numerical semigroups of multiplicity $e$ and embedding dimension $e-2$, $\SM{m,n}$ when $2\le m< n\le e-2$, are studied. Their defining ideals are described explicitly, and their types, Frobenius numbers, and first Betti numbers are computed.  In this paper, we add the cases when $n=e-1$, although that is not quite Sally-type with multiplicity $e$ as the width changes.  We also fill in the $m=1$ which is idiosyncratic in general, as can be seen in the result on the Cohen-Macaulay type of $\SM{m,n}$ (see Theorem \ref{thm:type m,n}). At the time of submission of the manuscript, it was brought to the attention of the authors that in \cite[Theorem 3.1]{BCJ}, a minimal generating set for the defining ideal of $\SM{1,n}$, for $n \in [2,e-2],$ is computed.

In this paper, we compute the type and Frobenius number for all $\SM{m}$ and $\SM{m,n}$, and construct minimal free resolutions for the corresponding semigroup rings of $\SM{1}$, $\SM{2}$, $\SM{2,3}$ and $\SM{3,4}$. We show that the type of $\SM{m}$ is $e-m$, except when $m=1$ or $e-1$, in which case it is symmetric and of type $1$. For $\SM{1,n}$, we show that it is never symmetric and has type $2$ or $3$, whereas $\SM{m,n}$ is symmetric precisely when $(m,n)=(2,3)$, the case studied by Sally. Furthermore, if $(m,n)\neq (2,3)$ and $m \geq 2$, the type of $\SM{m,n},$ is $e-n+1$ or $e-n+2$ depending on whether $n\le 2m$ or $n>2m$.  

Using Hochster’s formula, we compute the minimal number of generators of $\SM{m}$ (Theorem \ref{thm:first betti m}), following the approach of \cite{DGS3}, and also compute their defining ideal (Theorem \ref{thm:defining ideal m}). In Theorems \ref{thm:res of I1}, \ref{thm:res of Im}, \ref{thm:res of I23}, and \ref{thm:res of I34}, we construct minimal resolutions and determine the Betti numbers for the symmetric cases, $\SM{1}, \SM{e-1}, \SM{2,3}$ and for the special cases $\SM{2}$ and $\SM{3,4}$. As a consequence, Corollaries \ref{Betti1},  \ref{Betti2}, \ref{Betti3}, and  \ref{cor:I34} show that the Betti numbers of $\SM{1}, \SM{2}$, $\SM{2,3},$ and $\SM{3,4}$ all satisfy $\beta_t\le t{e\choose t+1},$ the conjectured bound of Caviglia-Moscariello-Sammartano \cite{CMS}:
$$\beta_t\leq t{\wid(S)+1 \choose t+1}.$$

Finally, we study the Betti numbers of Sally type semigroups, $\SM{m}$ and $\SM{m,n},$ as a function of $m$ and $n.$ For a fixed $m \geq 4,$ the family of numerical semigroups of the form $\SM{m,m+1}$  have remarkable similarities. For instance, it appears that the first $(m-3)$ Betti numbers are identical for $\SM{m,m+1}$ and $\SM{m+1,m+2}$. In Section \ref{sec:conj}, we propose some conjectures for the Betti numbers of families of Sally type semigroups in relation to those of the corresponding Gorenstein cases of Sally type semigroups.

\section{Structure of $\SM{m}$} \label{sec:drop1}

The goal of this section is to understand the structure of $\SM{m}$ in detail. In the first part, we study the numerical semigroup itself and compute its Frobenius number and Cohen–Macaulay type. In the second part, we examine the defining ideal, determining its height, the number of minimal generators, and an explicit minimal generating set. In the final part, we construct minimal free resolutions for the cases $m=1$ and $m=2.$

\subsection{The Numerical Semigroup $\SM{m}$.}

 We start by computing the Frobenius number $F(\SM{m})$ of $\SM{m}$, i.e., the largest integer not in $\SM{m}$. We then use this information to obtain the Cohen-Macaulay type of $\SM{m}$, which can be computed as the cardinality of the set of pseudo-Frobenius elements of $\SM{m}$ (\cite[Proposition 2.7]{cavaliere1982form}). 

\begin{proposition}
    For $1 \leq m \leq e-1$, the Frobenius number of $\SM{m}$ is given by 
    \[ F(\SM{m}) = 
    \begin{cases}
        2e+1, & m = 1, \\
        e + m, & \text{otherwise.}
    \end{cases}\]    
\end{proposition}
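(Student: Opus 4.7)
The plan is to compute the Apéry set
$$\mathrm{Ap}(S,e) = \{w_0, w_1, \dots, w_{e-1}\}$$
of $S = \SM{m}$ with respect to the multiplicity $e$, where $w_r$ denotes the smallest element of $S$ congruent to $r$ modulo $e$, and then invoke the classical identity $F(S) = \max_r w_r - e$.

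The easy indices $r \in [1,e-1]\setminus\{m\}$ are handled first: the integer $e+r$ is a minimal generator of $S$, while $r \notin S$ since $0 < r < e$ and $e$ is the multiplicity. Hence $w_r = e+r$ for all such $r$.

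The crux is the index $r = m$. Since $e+m$ is not a generator of $S$, and any sum of two or more generators is at least $2e > e+m$, one has $w_m \geq 2e+m$. I would then split into two cases. For $m \geq 2$, the decomposition $2e+m = (e+1) + (e+m-1)$ exhibits $2e+m \in S$, since both $1$ and $m-1$ lie in $[0,e-1]\setminus\{m\}$; so $w_m = 2e+m$. Comparing Apéry elements, the inequality $2e+m \geq 2e+2 > 2e-1$ shows the maximum is $2e+m$, yielding $F(S) = e+m$.

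For $m=1$, one must rule out $2e+1 \in S$. In any expression $2e+1 = \sum_{j\ne 1} c_j(e+j)$ with $k := \sum c_j$, one has $k \leq 2$ (because $k \geq 3$ would force the element to be at least $3e$), and $k=1$ would put the element in $[e,2e-1]$; hence $k=2$ and $\sum c_j\, j = 1$, which is impossible as each $j \in \{0,2,3,\dots,e-1\}$ contributes either $0$ or at least $2$. Thus $w_1 \geq 3e+1$, and the decomposition $3e+1 = (e+2) + (2e-1)$ (valid since $\gcd = 1$ forces $e \geq 3$ here) shows $w_1 = 3e+1$, giving $F(S) = 2e+1$. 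The main obstacle is exactly this length bookkeeping in the $m=1$ case; everything else reduces to reading off the maximum among the now-known Apéry elements.
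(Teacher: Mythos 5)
Your argument is correct, but it reaches the conclusion by a different route than the paper. The paper verifies directly that the claimed $F$ is not in $\SM{m}$ and that all of $F+1,\dots,F+e$ lie in $\SM{m}$ (so that everything beyond $F$ is obtained by adding multiples of $e$); you instead compute the full Ap\'ery set $\mathrm{Ap}(\SM{m},e)$ and apply Selmer's formula $F=\max_r w_r - e$. The two proofs share their essential content: your decomposition $2e+m=(e+1)+(e+m-1)$ for $m\ge 2$ and $3e+1=(e+2)+(2e-1)$ for $m=1$ are exactly the ones the paper uses, and your length-counting argument showing $2e+1\notin\SM{1}$ (no representation with $k=1,2,3$ summands works) is the one nontrivial exclusion in either version. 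What your approach buys is reusable information: the Ap\'ery set you compute, namely $w_r=e+r$ for $r\ne m$ together with $w_m=2e+m$ (resp.\ $w_1=3e+1$ when $m=1$), immediately yields the pseudo-Frobenius numbers and hence the type, which the paper derives separately in its next theorem. The paper's check is marginally shorter if one only wants the Frobenius number. One small point worth making explicit in your write-up: the case $m=e-1$ is covered by your ``otherwise'' branch, where the maximum of the Ap\'ery set is still $w_m=2e+m$ since the remaining $w_r$ are at most $2e-2$; this is fine but deserves a sentence, as the paper's case (iv)-style statements sometimes treat $m=e-1$ separately.
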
    
\begin{proof}
    It is enough to check that this $F(\SM{m}) \notin \SM{m}$ and for all $s$ such that $F(\SM{m}) + 1 \leq s \leq F(\SM{m})+e,$ we have $s \in \SM{m}.$ While it is easy to see that $F(\SM{m}) \notin \SM{m},$ we check the latter condition in each case separately.

    Let $m = 1.$ Then $\SM{1} = \la e,e+2,\ldots,2e-1 \ra.$ For any $s$ such that $2e+2 \leq s \leq 3e-1,$ we have $s= e + (e+\alpha) \in \SM{1}$ where $2 \leq \alpha \leq e-1.$ Moreover, $s = 3e \in \SM{1}$ and $s = 3e+1 = (2e-1) + (e+2) \in \SM{1}.$

    For $m \neq 1,$ we show that $[e+m+1,2e+m] \subseteq \SM{m}.$ Clearly, $e+m+1,\ldots,2e-1\in \SM{m}.$ It remains to show that $2e+\alpha \in \SM{m}$ where $0 \leq \alpha \leq m.$ Note that for $\alpha \in [0,m-1],$ we have $2e+\alpha = e +(e+\alpha) \in \SM{m}.$ Also, $2e+m = (e+1) + (e+m-1) \in \SM{m}.$ 
\end{proof}

The \textit{type} of a numerical semigroup $S,$ denoted by $t(S),$ is the cardinality of the set $\{x \in \mathbb{Z} \setminus S \mid x + s \in S,  \forall s \in S \setminus \{0\}\}.$ Such a set is denoted by $\PF(S)$ and its elements are called pseudo-Frobenius numbers of $S.$
A numerical semigroup $S$ is \textit{symmetric} if there is an integer $F$ such that an integer $x\in S \Leftrightarrow F-x \notin S$.  This number $F$ is called the Frobenius number as it is the largest integer not in $S$.  Thus, to be symmetric, a numerical semigroup must have an odd Frobenius number. Note that being symmetric is equivalent to the associated semigroup ring being Gorenstein. Thus, the type of a symmetric semigroup is $1$ and the number of gaps is half of $F+1$. The numerical semigroup is \textit{almost-symmetric} if the number of gaps is half of $F+t(S).$ Thus, the following computation of the Cohen-Macaulay type of $\SM{m}$ also gives us a characterization of when it is symmetric or almost symmetric.

\begin{theorem} 
For $1 \leq m \leq e-1$, the Cohen-Macaulay type of $\SM{m}$ is
\begin{align*}
    t(\SM{m}) = \begin{cases}
        1, & m \in \{1,e-1\}, \\
        e-m, & \text{ otherwise.}
    \end{cases}
\end{align*}
In particular, $\SM{m}$ is symmetric if and only if $m \in \{1,e-1\}$ and is almost symmetric otherwise. 
\end{theorem}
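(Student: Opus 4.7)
The plan is to compute the type via the Apéry set $\operatorname{Ap}(\SM{m}, e) = \{s \in \SM{m} : s - e \notin \SM{m}\}$ and its maximal elements under the partial order $a \le_{\SM{m}} b \iff b - a \in \SM{m}$. By the standard correspondence $\PF(\SM{m}) = \{w - e : w \in \max \operatorname{Ap}(\SM{m}, e)\}$, this yields $t(\SM{m}) = |\max \operatorname{Ap}(\SM{m}, e)|$.

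First I would write down the Apéry set. For each residue class $i$ modulo $e$ the representative is the smallest element of $\SM{m}$ in that class, which is $0$ for $i = 0$ and the generator $e + i$ for $i \in [1, e - 1] \setminus \{m\}$. For residue $m$, since $e + m \notin \SM{m}$, I invoke the preceding Frobenius calculation: every integer strictly greater than $F(\SM{m})$ lies in $\SM{m}$, so the residue-$m$ representative $w_m$ equals $2e + m$ when $m \neq 1$ (which specializes to $3e - 1$ when $m = e - 1$) and $3e + 1$ when $m = 1$.

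Next I would compute the maximal elements. Any two Apéry elements $e + i, e + j$ with distinct indices differ by an integer of absolute value in $[1, e - 1]$, which is not in $\SM{m}$, so they are pairwise incomparable. Thus maximality of each $e + i$ is decided purely by comparison with $w_m$. For $2 \le m \le e - 2$ the key computation is $w_m - (e + i) = e + (m - i)$: if $1 \le i < m$ this is a generator, so $e + i \le_{\SM{m}} w_m$, whereas if $m < i \le e - 1$ it lies strictly between $0$ and $e$ and so is not in $\SM{m}$. Therefore $\max \operatorname{Ap}(\SM{m}, e) = \{w_m\} \cup \{e + i : m + 1 \le i \le e - 1\}$, of cardinality $e - m$. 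For $m \in \{1, e - 1\}$, by contrast, the translates $w_m - (e + i)$ sweep across intervals contained entirely in the generator set of $\SM{m}$, so $w_m$ dominates every $e + i$ and $t = 1$.

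For the symmetric/almost symmetric dichotomy I would argue as follows. Type $1$ is equivalent to being Gorenstein, equivalently symmetric, so the cases $m \in \{1, e - 1\}$ are immediate. For the remaining range $2 \le m \le e - 2$, the set $\PF(\SM{m}) = \{m + 1, \ldots, e - 1, e + m\}$ is invariant on its non-Frobenius part under $x \mapsto F - x$ through the involution $m + j \leftrightarrow e - j$, which is the standard characterization of almost symmetry; equivalently, a direct gap count gives $g(\SM{m}) = e = (F + t)/2$. The main obstacle I anticipate is the separate treatment of $m = 1$: there the Apéry representative of residue $m$ jumps by an extra multiple of $e$ (because $2e + 1 \notin \SM{1}$), so the dominance argument has a different shape from the generic pattern and has to be verified by an independent interval check.
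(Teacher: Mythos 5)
Your proposal is correct, and it takes a genuinely different route from the paper. The paper works directly with the definition of pseudo-Frobenius numbers: it writes down the gap set $G(\SM{m})$, and for $m\notin\{1,e-1\}$ it verifies by hand that the pseudo-Frobenius numbers are exactly $[m+1,e-1]\cup\{e+m\}$ (exhibiting, for each $x\in[1,m]$, the witness $e+(m-x)$ with $x+(e+(m-x))=e+m\notin\SM{m}$), while for $m\in\{1,e-1\}$ it shortcuts entirely via the count $|G|=(F+1)/2$, which forces symmetry and hence type $1$. You instead compute $\operatorname{Ap}(\SM{m},e)$ and its maximal elements, using $\PF(S)=\{w-e: w\in\operatorname{Maximals}_{\le_S}\operatorname{Ap}(S,e)\}$. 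Your mechanism is arguably cleaner: since the nonzero Apéry elements other than $w_m$ all lie in $[e+1,2e-1]$ and so are pairwise incomparable, every maximality question reduces to a single subtraction against $w_m=2e+m$ (resp.\ $3e+1$ when $m=1$), and the case split falls out of whether $e+(m-i)$ is a generator or a gap; I checked the interval sweeps for $m\in\{1,e-1\}$ and they do land inside the generator sets as you assert. What the paper's approach buys is that the gap count $|G(\SM{m})|=e=(F+t)/2$ is needed anyway for the almost-symmetry claim, so it gets symmetry in the boundary cases for free; your route needs either that same count or Nari's involution criterion on $\PF\setminus\{F\}$ (you correctly offer both, and the involution $m+j\leftrightarrow e-j$ does fix $[m+1,e-1]$). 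Both arguments are complete; the two proofs recover the same set $\PF(\SM{m})$.
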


\begin{proof}
If  $m = 1,$ then from the above result we know that the Frobenius number, $F(\SM{1}) = 2e+1.$ Denote by $G(\SM{m})=\{x \in \N \mid x \not\in \SM{m}\}$ the set of gaps of $\SM{m}$. Since $G(\SM{1})= [1,e-1] \cup \{e+1, 2e+1\}$ satisfies $|G(\SM{1})| = e+1 = (F(\SM{1})+1)/2,$ we get $\SM{1}$ is a symmetric numerical semigroup. It further follows that $\PF(\SM{1}) = \{F(\SM{1})\}$ and $t(\SM{1})=1.$ Similarly, $G(\SM{e-1})= [1,e-1] \cup \{2e-1\}$ and $|G(\SM{e-1})| = e = (F(\SM{e-1})+1)/2$ implies that $\SM{e-1}$ is a symmetric numerical semigroup with $t(\SM{e-1})=1.$ 

If $m \notin \{1,e-1\},$ then $G(\SM{m})=[1,e-1] \cup \{e+m\}.$ We claim that $\PF(\SM{m}) = [m+1,e-1] \cup \{e+m\}.$ Since, $e+m = F(\SM{m}),$ it is a pseudo-Frobenius number. For $x \in [m+1,e-1],$ as $x+(e+j) \geq e+m+1$ for all $j \in [0,e-1] \setminus \{m\},$ we get $x+(e+j) \in \SM{m}.$ Whereas, for $x \in [1,m],$ $x+(e+(m-x)) = e+m \notin \SM{m}.$ Since $m-x \in [0,e-1] \setminus \{m\},$ the claim follows. Therefore, when $m \notin \{1,e-1\},$ we have $t(\SM{m}) = e-m$ and $|G(\SM{m})| = e = (F(\SM{m})+t(\SM{m}))/2$ implying that $\SM{m}$ is almost symmetric (\cite[Proposition 10]{NumSgps}).
\end{proof}

\subsection{The Defining Ideal $\IM{m}$.}

For $1 \leq m \leq e-1$, let $\SM{m}$ be the numerical semigroup generated by the set  $[e, 2e-1] \bs \{e+m\}$. Let $\RM{m}=k[X_i \mid i\in [0,e-1] \bs \{m\}]$ denote a polynomial ring over a field $k$ with weighted degrees, $\deg X_i = e+i,$ for all $i \in [0,e-1] \bs \{m\}.$ Then the semigroup ring $k[\SM{m}] \cong \RM{m} / \IM{m}$ has embedding dimension $e-1,$ and multiplicity $e$, where $\IM{m}$ is the defining ideal of $\SM{m}.$ That is, $\IM{m}$ is a binomial prime ideal of height $e-2.$ Our study of $\IM{m}$ begins with determining its minimal number of generators; in other words, the first Betti number of $k[\SM{m}].$
\begin{theorem}  \label{thm:first betti m}
    The minimal number of generators of the defining ideal $\IM{m},$
    $$\mu(\IM{m}) =\begin{cases}
        {e-1 \choose 2}, & m=2, \\
        {e-1 \choose 2}-1, & m\neq 2.
    \end{cases}$$
\end{theorem}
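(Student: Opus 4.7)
The plan is to apply Hochster's formula for the graded Betti numbers of the semigroup ring $k[\SM{m}]$:
$$\mu(\IM{m}) = \sum_{\omega \in \SM{m}} \dim_k \tilde H_0(\Delta_\omega; k),$$
where $\Delta_\omega$ is the simplicial complex on the vertex set $[0, e-1] \setminus \{m\}$ whose faces $F$ satisfy $\omega - \sum_{i \in F}(e+i) \in \SM{m}$. This mirrors the methodology used to compute $\mu(I^e(m,n))$ in \cite{DGS3}.

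I would partition the enumeration of $\omega$ into two degree ranges. In the \emph{quadratic} range $\omega = 2e + s$ with $s \in [0, e-1]$, only two-summand factorizations of $\omega$ exist, so $\Delta_\omega$ is a disjoint union of edges $\{i, s-i\}$ (for pairs in $[0, e-1] \setminus \{m\}$ summing to $s$) together with an isolated vertex at $s/2$ when $s$ is even and $s/2 \neq m$. A direct pair-count then yields $\beta_{1,\omega}$: the three subcases $s < m$, $s = 2m$, and $s \in [m, e-1] \setminus \{2m\}$ give closed-form contributions. In the \emph{mixed} range $\omega = 2e + s$ with $s \in [e, 2e-2]$, three-summand factorizations also appear: they can shrink the vertex set of $\Delta_\omega$ (when $\omega - (e+i)$ falls in the gap set of $\SM{m}$) and introduce $2$-faces $\{i, j, k\}$ with $i+j+k = s - e$ that merge components. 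For $\omega \geq 4e-1$, the Apéry set description of $\SM{m}$ from Section~2.1 (with $w_i = e+i$ for $i \neq m$ and $w_m = 2e+m$ when $m \geq 2$, with the analogous tweak when $m = 1$) implies that $\Delta_\omega$ is connected, so no further contributions arise.

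After summing the contributions from both ranges, the total collapses to $\binom{e-1}{2}$ when $m = 2$ and to $\binom{e-1}{2} - 1$ otherwise. The asymmetry at $m = 2$ arises at one specific high degree (related to $4e-2$), where three factorizations remain on pairwise disjoint supports in $\Delta_\omega$ because the cubic ``bridging'' factorization that would merge them requires the missing generator $e+2$; for $m \neq 2$, an analogous bridging $2$-face is always available in $\Delta_\omega$ and removes one component. The main obstacle is the bookkeeping in the mixed range: one must carefully verify which $2$-faces appear in each $\Delta_\omega$, track how each merges or fails to merge the quadratic edges, and pinpoint the unique non-bridging phenomenon that distinguishes $m = 2$ from all other admissible values of $m$.
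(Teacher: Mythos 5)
Your proposal follows essentially the same route as the paper: Hochster's formula reduces $\mu(\IM{m})$ to counting connected components of the squarefree divisor complexes $\Delta_\omega$ (the paper's quantity $\#\{\text{vertices}\}-\operatorname{rank}\delta_1-1$ is exactly your $\dim_k\tilde H_0(\Delta_\omega;k)$), your quadratic/mixed split of $\omega=2e+s$ coincides with the paper's cases (a)--(d), and the vanishing for $\omega\le 2e+1$ and $\omega\ge 4e-1$ is handled the same way (a cone vertex forces connectedness). The one concrete misstatement in your sketch is the location and shape of the $m=2$ anomaly: it is not at a degree near $4e-2$, but at $\omega=3e+3=3(e+1)$, and the extra contribution is a single additional isolated vertex rather than three factorizations on pairwise disjoint supports. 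Concretely, for $m=2$ the vertex $1$ of $\Delta_{3e+3}$ is isolated because the edge $\{0,1\}$ (equivalently the $2$-face $\{0,1,2\}$) requires $e+2\in\SM{2}$, which fails exactly when $m=2$; the corresponding extra minimal generator is the cubic $X_1^3-X_0^2X_3$, which is visible as the minor of $B_2$ involving the last column and as the image $\psi(f_0)$ in Theorem \ref{thm:res of Im}. With that relocation your bookkeeping plan agrees with the paper's computation, so this is a correctable detail rather than a flaw in the method.
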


\begin{proof}
    We prove this by computing the first Betti number of $k[\SM{m}]$ using Hochster's formula \cite[Theorem 5.5.1]{BrunsHerzog} and proceed in the following manner: For each $\lambda \in \SM{m}$, consider the simplicial complex $\Delta_\lambda$ and let $\delta_i$ denote the homogeneous differential maps of degree 0 in the minimal $\Z$-graded resolution of $\Delta_\l$, and compute
\begin{enumerate}
     \item[(I)] the number of zero-dimensional faces in $\D_\l$,
     \item[(II)] $\rk M$, where $M$ denotes the matrix representation of $\delta_1$,
     \item[(III)] $\beta_{1,\l} = \text{(I)}-\text{(II)}-1.$
\end{enumerate}
Then $\beta_1(\SM{m}) = \sum_{\l \in \SM{m}} \beta_{1,\l}.$ This works because all but finitely many $\b$ are zero. We begin by eliminating all $\l \in \SM{m}$ with $\beta_{1, \lambda}=0.$ In particular, we prove that $\beta_{1, \lambda}=0$ for all $\lambda \leq 2e+1$ and $\lambda \geq 4e-1$. Note that $$\D_\lambda = \begin{cases}
    \{\emptyset, \{\l\}\}, & \l \leq 2e-1, \\
    \{\emptyset, \{e\}\}, & \l =2e, \\
    \{\emptyset, \{e\},\{e+1\}, \{e,e+1\}\}, & \l=2e+1,
\end{cases}$$
which are all exact. Now, suppose that $\l \geq 4e-1$, and write $\l =4e-1+x$ for $x \geq 0$. When $m \neq 1$, $\{e,e+j\} \in \D_\l$ for all $j \in [1,e-1]\bs\{m\}$ unless
\begin{equation}\label{eq:biggerthan4e-1}
    e+x-j-1=m, \text{ i.e., } j=e+x-m-1 \in [1,e-1]\bs\{m\}.
\end{equation}
However, in this case we may pick $y \in [1,m-1]$ to get $\{e+y,e+(e+x-m-1)\}\in \D_\l$. \cite[Lemma 3.5]{DGS3} now implies $\beta_{1,\lambda}=0$ for $\l \geq 4e-1$ and $m \neq 1$. When $m=1$, $\{e,e+j\} \in \D_\l$ for all $j \in [1,e-1]\bs\{m\}$ unless \Cref{eq:biggerthan4e-1} is true or
\begin{equation}\label{eq:biggerthen4e-1form=1}
    e+x-j-1=e+1, \text{ i.e., } j=x-2 \in [1,e-1]\bs\{1\},
\end{equation}
since $F(\SM{1})=2e+1$. Putting $m=1$ in \Cref{eq:biggerthan4e-1} we note that $\{e+(e+x-2)\}$ is not a zero-dimensional face in this case and if \Cref{eq:biggerthen4e-1form=1} occurs, then pick $y \in \{e-1,e-2\}\bs\{j\}$ to obtain $\{e+y,e+j\} \in \D_\l$. Again, \cite[Lemma 3.5]{DGS3} implies $\beta_{1,\lambda}=0,$ establishing the claim.

Next, assume $2e+2 \leq \lambda \leq 4e-2$, $\lambda \neq 2e+2m$. Write $\l = 2e+2+x$, where $0 \leq x \leq 2e-4$, and $x \not\in \{2m-2\}.$ Then 
\begin{align*}
    \{e+j\} \in \D_\l \Longleftrightarrow 
    \begin{cases}
    j \in \I = \big([0,e-1] \cap [0,x+2]\big) \bs \{ m,x+2-m \}, & m \neq 1, \\
    j \in \I' = \big([0,e-1] \cap [0,x+2]\big) \bs \{ 1,x+1,x+1-e \}, & m= 1.
    \end{cases}
\end{align*}
 
 First assume $m>2$. We have the following cases:
\begin{itemize}
    \item[(a)] If $0 \leq x \leq m-3$, then $|\I| = \big\lvert [0,x+2] \big\rvert = x+3,$ 
    \item[(b)] If $m-2 \leq x \leq e-3$, then $|\I| = \big\lvert[0,x+2] \bs \{m,x+2-m\} \big\rvert =x+1$,
    \item[(c)] If $e-2 \leq x \leq e+m-3$, then $|\I| = \big\lvert[0,e-1]\bs\{m,x+2-m\} \big\rvert =e-2$,
    \item[(d)] If $e+m-2 \leq x \leq 2e-4$, then $|\I| = \big\lvert [0,e-1] \bs \{m\} \big\rvert = e-1$.
\end{itemize}
Let $M$ denote the matrix representation of the first differential operator $\delta_1.$ We now compute $\rk M$ for the above cases. Note that any one-dimensional face is a superset of some zero-dimensional face. In the case of (a) and (b), for any $j<l \in \I$, $$\l -(e+j)-(e+l) =x+2-j-l < e-1.$$
So the only one-dimensional faces in the cases (a) and (b) are $\{e+j,e+l\}$ for some $j<l \in \I$ with $j+l=x+2$. In essence, we pair up the zero-dimensional faces satisfying this condition to get all one-dimensional faces. Thus, 
 \begin{enumerate}
     \item[(a)] for $0 \leq x \leq m-3$, $\rk M = \left\lfloor \frac{x+3}{2}\right\rfloor$, 
     \item[(b)] for $m-2 \leq x \leq e-3$, $\rk M = \left\lfloor \frac{x+1}{2}\right\rfloor$. 
 \end{enumerate}
When $e-2 \leq x \leq 2e-4,$ for $\{e,e+j\},$ $j \in \I \bs \{0\},$ to be a one-dimensional faces of $\D_\l$ we must have 
\begin{align*}
\{e,e+j\} \in \D_\l & \Leftrightarrow  \l -e-(e+j) = x+2-j \in \SM{m}\\
& \Leftrightarrow x+2-j \geq e, \text{ and } \ x+2-j \neq e+m\\
& \Leftrightarrow j \in \big([1,x+2-e] \cap \I \big) \setminus \{ x+2-e-m\}.
\end{align*}
For (c), when $x=e-2$, no such face is possible. When $e-2 < x \leq e + (m-1) -2$, we get $x+2-e$ such faces. Thus, $\rk M \geq x+2-e$.

For (d), if $x=e+m-2,$ then $x+2-e-m=0 $ is not in $[1,x+2-e]$, but $m$ is. If $x>e+m-2,$ then $m$ and $x+2-e-m$ are in $[1,x+2-e]$. If $x+2-e-m=m$, we get $x+1-e$ faces. Otherwise, choosing 
\begin{equation}\label{eq:(e)chooseyform=2}
    y \in [1,m-1] \setminus \{x+2-e-m\}
\end{equation} the face $\{e+y,e+(x+2-e-m)\}$ increases $\rk M$ by one. Thus, $\rk M \geq x+1-e$.

We now look for other one-dimensional faces that add to the rank of $M.$ In particular, the faces of the form $\{e+j,e+l\}$ where $j,l \in [1,e-1] \setminus \{m\}$ with $j \geq x+2-e+1$. This gives us the following $\ll \frac{2e-x-3}{2} \rr$  pairs of $(j,l)$:

\begin{figure}[!h]
    \centering
\begin{tikzpicture}
    \node at (4,3) {$x+2-e+1 < x+2-e+2 < \cdots < e-2 < e-1$};
    \draw (3.5,3.25) -- (3.5,3.50);
   \draw (3.5,3.50) -- (6.5,3.50);
    \draw (6.5,3.50) -- (6.5,3.25);
    \draw (1,2.75) -- (1,2.5);
    \draw (1,2.5) -- (7.75,2.5);
    \draw (7.75,2.5) -- (7.75, 2.75);
\end{tikzpicture}
\end{figure}
In (c), $m,x+2-m$ are in the above sequence but for (d), they are not. Thus,
 \begin{enumerate}
     \item[(c)] for $e-2 \leq x \leq e+m-3$, $\rk M = x+2-e + \ll \frac{2e-x-3}{2} \rr-1 =  \ll \frac{x-3}{2} \rr +1$, 
     \item[(d)] for $e+m-2 \leq x \leq 2e-4$, $\rk M = x+1-e + \ll \frac{2e-x-3}{2} \rr = \ll \frac{x-3}{2} \rr +1$. 
 \end{enumerate}

When $m=2$, the only difference appears in (c) where the choice of the additional one-dimensional face as in \Cref{eq:(e)chooseyform=2} cannot be made. This subtracts one from $\rk M$, increasing $\beta_{1,\lambda}$, and thus, increasing $\beta_1$ for $m=2$ by $1$.

Now suppose $m=1.$ In this case, (a) does not exist, the computations in (b) and (c) remain the same as $\I = \I'$, and so, that leaves the case (d): $ e-1 \leq x \leq 2e-4$. When $x=e$, $x+1-e=1$ so that $\I=\I'$, and again, the calculations do not change. When $x \neq e$, the number of zero-dimensional faces is $|\I'| = \big\lvert [0,e-1]\bs\{1,x+1-e\} \big\rvert =e-2$. The rank also reduces by $1$, i.e., $\rk M = \ll \frac{x-3}{2}\rr,$ as we exclude the one-dimensional face $\{e,e+(x+1-e)\}$. Overall, $\beta_1$ remains the same as in the case of $m > 2.$

We can now apply steps (I)-(III) to get that when $m\neq 2$,
$$ \sum_{\lambda = 2e+2}^{4e-2} \beta_{1, \lambda} = {e-2 \choose 2} -3 +e = {e-1 \choose 2} -1,$$
and when $m=2, \sum_{\lambda = 2e+2}^{4e-2} \beta_{1, \lambda} = {e-1 \choose 2}$.

Note that the special case $\lambda = 2e+2m$ still appears in the intervals above. To get the final $\beta_1$, we only need to subtract the value of $\beta_{1,2e+2m}$ from above and add the actual value of $\beta_{1,2e+2m}$. One can check by looking at the proof of \cite[Lemma 4.4 (1)]{DGS3}, and essentially ignoring the parts including $n$, that the actual value of $\beta_{1,2e+2m}$ is the same as what appears in the interval above. The reason for computing this separately is that the numbers of zero-dimensional faces and the two kinds of one-dimensional faces contributing to $\rk M$ differ from the computations in (a)-(d), despite the final difference being the same. Thus, we have our result.
\end{proof}

For ease of notation, set 
\[ X_e = X_0^2, \quad X_{e+1} =X_0X_1, \quad X_{e+2}=X_1^2. \]

For $m \geq 1$, define
\begin{align*}
B_m =& \left\lbrace 
\begin{aligned}
    &\begin{bmatrix}
   X_0  & X_2 & \cdots & X_{e-2} \\
   X_2  & X_4 & \cdots & X_{e}\\
\end{bmatrix}, & m=1 \\[2mm]
&\begin{bmatrix}
   X_1 & X_3 & \cdots & X_e \\
   X_3 & X_5 & \cdots & X_{e+2}\\
\end{bmatrix}, & m=2 \\[2mm]
&\begin{bmatrix}
   X_0  & X_1 & \cdots & X_{e-4} & X_{e-2} \\
   X_2  & X_3 & \cdots & X_{e-2} & X_{e}\\
\end{bmatrix}, & m=e-1 \\[2mm]
&\begin{bmatrix}
    X_0  & \cdots & X_{m-3} & X_{m-1} & X_{m+1} & \cdots & X_{e-1} \\
    X_2  & \cdots & X_{m-1} & X_{m+1} & X_{m+3} & \cdots & X_{e+1}\\
\end{bmatrix}, & \text{ otherwise. }
\end{aligned} 
\right.\\[5mm]
A_{m} =& \q 
\begin{bmatrix}
    X_0   & \cdots & X_{m-2}& X_{m+1} & \cdots & X_{e-2}&  X_{e-1} \\
    X_1  &  \cdots&  X_{m-1}  & X_{m+2} & \cdots  &X_{e-1}   & X_e\\
\end{bmatrix}.
\end{align*}

The following is a technical lemma used in computing the minimal free resolution of $\IM{1}$ and $\IM{2}$. Although the conclusion is true for all $1 \leq m \leq e-1$, we only make use of it in the aforementioned cases.

\begin{lemma}\label{lem:height of m}
For all $m\ge 1$, ${\bf I}_2(A_m) \subseteq \RM{m}$ is an ideal of height $e-3.$  
\end{lemma}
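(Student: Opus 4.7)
The plan is to prove matching inequalities $\operatorname{ht} I_2(A_m) \le e-3$ and $\operatorname{ht} I_2(A_m) \ge e-3$: the first via the classical determinantal height bound and the second via a direct component analysis of $V(I_2(A_m))$.

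For the upper bound I would observe that $A_m$ is a $2 \times (e-2)$ matrix over $\RM{m}$: its column indices form $[0, e-1] \setminus \{m-1, m\}$, a set of cardinality $e-2$. The standard inequality $\operatorname{ht} I_t(A) \le (p-t+1)(q-t+1)$ for the $t$-minors of an arbitrary $p \times q$ matrix over a Noetherian ring, applied with $p = 2$, $q = e-2$, $t = 2$, gives $\operatorname{ht} I_2(A_m) \le e - 3$.

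For the lower bound, equivalently $\dim \RM{m}/I_2(A_m) \le 2$, I would describe the components of $V(I_2(A_m)) \subseteq \A^{e-1}$ set-theoretically. Over an algebraic closure, $p \in V(I_2(A_m))$ iff $A_m(p) = v w^T$ for some $v = (a,b)^T \in k^2$ and $w \in k^{e-2}$. If $a \neq 0$, set $t = b/a$; the Hankel-type columns $(X_i, X_{i+1})$ with $i \in \{0, \ldots, m-2\} \cup \{m+1, \ldots, e-2\}$ force $X_{i+1}(p) = t \, X_i(p)$, whence inductively $X_i(p) = t^i X_0(p)$ for $i \in [0, m-1]$ and $X_j(p) = t^{j-m-1} X_{m+1}(p)$ for $j \in [m+1, e-1]$. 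The last column $(X_{e-1}, X_0^2)$ then contributes exactly one further relation $X_0(p)^2 = t^{e-m-1} X_{m+1}(p)$, so this locus is parametrized by $(X_0, X_{m+1}, t) \in \A^3$ subject to a single hypersurface equation, yielding a $2$-dimensional component. The complementary situations ($a=0$ or $b=0$) respectively force the top or the bottom row of $A_m(p)$ to vanish identically; since the top row contains every variable of $\RM{m}$ except $X_{m-1}$, and the bottom row contains every such variable except $X_{m+1}$ (while $X_0(p)^2 = 0$ also forces $X_0(p) = 0$), each produces a component of dimension at most $1$.

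Collecting, every irreducible component of $V(I_2(A_m))$ has dimension at most $2$, so $\operatorname{ht} I_2(A_m) \ge (e-1) - 2 = e - 3$, matching the upper bound. I expect the main subtlety to be verifying that the parametrization in the principal case indeed yields an irreducible $2$-dimensional component, and ensuring the case analysis handles all boundary degenerations (such as $t = 0$ or some $X_i(p) = 0$); the outer-product factorization of rank-$\le 1$ matrices over an algebraically closed field keeps the argument structural, but the explicit bookkeeping is required for the final dimension count.
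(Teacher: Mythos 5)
Your argument is correct, but it takes a genuinely different route from the paper's. The paper obtains the lower bound on the height by importing an external result: for the arithmetic semigroup $\langle u,u+1,\dots,2u-1\rangle$ the defining ideal is the ideal of $2\times 2$ minors of an analogous $2\times u$ matrix $Y_u$ and is prime of height $u-1$ (citing \cite{GimSenSri}); it then introduces a relabelled copy $A'_m$ of $Y_{e-2}$ not involving $X_{m-1}$, observes that ${\bf I}_2(A_m)+(X_{m-1}-X_{m+1})={\bf I}_2(A'_m)+(X_{m-1}-X_{m+1})$ has height $e-2$, and deduces $\operatorname{ht}\,{\bf I}_2(A_m)\ge e-3$ from Krull's principal ideal theorem, the reverse inequality being the determinantal bound you also invoke. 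Your replacement of this specialization step by a direct decomposition of the rank-$\le 1$ locus is self-contained and elementary; what it gives up is the structural information (primality of ${\bf I}_2(A'_m)$) that the paper's source supplies, but since the lemma only needs the height, nothing is lost. Note that your closing worry is unnecessary: because the upper bound $\operatorname{ht}\le e-3$ already comes from Eagon--Northcott, you never need the principal piece to be an irreducible component of dimension exactly $2$; the bound $\dim\le 2$ on each constructible piece suffices. Two small things to patch: the factorization $A_m(p)=vw^T$ is not unique when $A_m(p)=0$, so the case split should be stated intrinsically (top row identically zero; bottom row identically zero; some top entry nonzero, which forces a unique $t$ with bottom row equal to $t$ times the top row); and for $m=e-1$ the matrix has columns $(X_i,X_{i+1})$ for $i\in[0,e-3]$, with no $X_0^2$ entry and no variable $X_{m+1}$, so the principal locus is parametrized by $(X_0,t)$ alone --- the dimension bound holds a fortiori, but your bookkeeping as written presumes $m\le e-2$.
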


\begin{proof}
Let $m \notin \{1,e-1\}.$ For any given $u$, consider the numerical semigroup minimally generated by $\{u, u+1, \ldots, 2u-1\}$ and define a map $k[y_0, \ldots, y_{u-1}] \to k[t]$ by $y_i \mapsto t^{u+i}.$ From \cite{GimSenSri}, we know that the defining ideal of the corresponding semigroup ring is the ideal of $2\times 2$ minors of 
$$ Y_u = \begin{bmatrix}
y_0 & \cdots &y_{u-2}& y_{u-1}   \\
y_1 & \cdots&  y_{u-1}  & y_0^2\\
\end{bmatrix}.$$
Hence ${\bf I}_2(Y_u)$ is a prime ideal of height $u-1$. In particular, for a matrix of the above form with $u$ columns, the ideal generated by $2 \times 2$ minors of the matrix is a prime ideal of height $u-1.$

Consider the matrix $$A'_{m} =  
\begin{bmatrix}
    X_0   & \cdots & X_{m-3} & X_{m-2} & X_{m+1} & \cdots & X_{e-2} &  X_{e-1} \\
    X_1  &  \cdots & X_{m-2} & X_{m+1} & X_{m+2} & \cdots  &X_{e-1} & X_0^2\\
\end{bmatrix}.$$ 
Identifying $k[y_0, \ldots, y_{u-1}]$ with $k[X_0, \ldots, X_{m-2}, X_{m+1}, \ldots, X_{e-1}]=\RM{m-1,m}$ in the natural order, as $A'_m$ has $e-2$ columns, we get ${\bf I}_2(A'_m)$ is a (prime) ideal of height $e-3$ in  $\RM{m-1,m}$ and so in $\RM{m}$. Further, ${\bf I}_2(A'_m)+(X_{m-1}-X_{m+1})$ has height $e-2$ in $\RM{m}.$ As ${\bf I}_2(A_m)+(X_{m-1}-X_{m+1}) = {\bf I}_2(A'_m)+(X_{m-1}-X_{m+1}),$ we get height of ${\bf I}_2(A_{m})$ is $e-3$. 
Now, for $m \in \{1,e-1\},$ since $A_m$ is a generic matrix of size $2 \times (e-2),$ we get ${\bf I}_2(A_m)$ is a prime ideal of height $e-3.$
\end{proof}

\begin{theorem}  \label{thm:defining ideal m}
    For all $m \geq 1,$ the defining ideal $\IM{m} = {\bf I}_2(A_m) + {\bf I}_2(B_m),$ and is minimally generated by all the ${e-2\choose 2}$ minors of size $2$ in $A_m$ and the size $2$ minors of $B_m$ containing the column $\begin{bmatrix} X_{m-1}\\ X_{m+1}\end{bmatrix}$.  
\end{theorem}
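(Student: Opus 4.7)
My plan proceeds in three stages.

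Stage 1 (Containment). I would first verify that every $2\times 2$ minor of $A_m$ and $B_m$ lies in $\IM{m}$. A minor of $A_m$ has the form $X_a X_{b+1}-X_{a+1}X_b$, and a minor of $B_m$ has the form $X_a X_{b+2}-X_{a+2}X_b$, under the abbreviations $X_e=X_0^2$, $X_{e+1}=X_0X_1$, $X_{e+2}=X_1^2$. Applying $\phi$ sends both monomial summands of each such minor to the same power of $t$, so $\mathbf{I}_2(A_m)+\mathbf{I}_2(B_m)\subseteq\IM{m}$. Let $J$ denote the subideal generated by the $\binom{e-2}{2}$ minors of $A_m$ together with only the $B_m$-minors involving the distinguished column $\bigl[X_{m-1},X_{m+1}\bigr]^T$; thus $J\subseteq\mathbf{I}_2(A_m)+\mathbf{I}_2(B_m)\subseteq\IM{m}$.

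Stage 2 (Counting). A case analysis on the columns of $B_m$ shows that $B_m$ has $e-2$ columns when $m\ne 2$ (checked separately for the generic case and for $m\in\{1,e-1\}$, where the would-be column with top $X_{m-2}$ is unavailable) and $e-1$ columns when $m=2$. The distinguished-column minors then contribute $e-3$ or $e-2$ additional binomials, giving totals $\binom{e-2}{2}+(e-3)=\binom{e-1}{2}-1$ in the general case and $\binom{e-2}{2}+(e-2)=\binom{e-1}{2}$ when $m=2$. These match $\mu(\IM{m})$ from Theorem \ref{thm:first betti m}.

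Stage 3 (Equality). The target is $J=\IM{m}$, which simultaneously establishes both asserted equalities and the minimality of the listed set. Combining Lemma \ref{lem:height of m}, $\operatorname{ht}\mathbf{I}_2(A_m)=e-3$, with the primality of $\IM{m}$ of height $e-2$ and a well-chosen listed $B_m$-minor sitting outside all minimal primes of $\mathbf{I}_2(A_m)$ distinct from $\IM{m}$, I would show $\operatorname{ht}(J)=e-2$, so that $\IM{m}$ is a minimal prime of $J$. A degree-by-degree comparison in the $\SM{m}$-grading then promotes this to equality: in each degree $\lambda$ where $\beta_{1,\lambda}\ne 0$ the number of listed minors of degree $\lambda$ is at most $\beta_{1,\lambda}$ (since $J\subseteq\IM{m}$ preserves degree), and because the overall counts agree by Stage 2, equality must hold in every degree. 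This forces $k$-linear independence of the listed binomials modulo $\mathfrak{m}\IM{m}$, and $J=\IM{m}$ follows by Nakayama.

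The main obstacle is Stage 3: containment plus a matching count of generators does not by itself produce equality, since one must rule out the possibility that the listed binomials become $k$-linearly dependent in $\IM{m}/\mathfrak{m}\IM{m}$. The graded matching above is clean in principle but requires carefully aligning the listed minors with the degrees identified in the Hochster analysis of Theorem \ref{thm:first betti m}, particularly through the small cases $m\in\{1,2,e-1\}$ and the exceptional degree $\lambda=2e+2m$ isolated in that proof.
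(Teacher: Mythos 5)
Your Stages 1 and 2 are fine and agree with the paper: containment is immediate from the constant subscript differences, and your column counts for $B_m$ (including the extra column when $m=2$) correctly reconcile the number of listed minors with $\mu(\IM{m})$ from \Cref{thm:first betti m}. The gap is in Stage 3, precisely at the step you yourself flag as the main obstacle. The parenthetical justification ``the number of listed minors of degree $\lambda$ is at most $\beta_{1,\lambda}$ (since $J\subseteq\IM{m}$ preserves degree)'' is not a valid deduction: degree preservation bounds the dimension of the \emph{span} of the images of the listed minors in $(\IM{m}/\mathfrak{m}\IM{m})_\lambda$ by $\beta_{1,\lambda}$, not their \emph{number}. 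Nothing prevents, a priori, three listed binomials from landing in a degree with $\beta_{1,\lambda}=2$ while another degree receives too few, with the totals still matching; and even if the per-degree counts did match, two binomials of the same degree can be congruent modulo $\mathfrak{m}\IM{m}$, so count agreement does not force independence. The chain ``total counts agree $\Rightarrow$ equality in every degree $\Rightarrow$ $k$-linear independence'' is therefore circular: the per-degree bound you invoke is essentially the independence you are trying to establish. The height computation via \Cref{lem:height of m} only shows that $\IM{m}$ is a minimal prime of $J$, which is perfectly compatible with $J\subsetneq\IM{m}$ and does not help close the gap.

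What is needed, and what the paper supplies, is a direct argument that each listed $B_m$-minor is a genuinely new generator. The paper orders the distinguished-column minors and tracks monomial supports: one monomial of each such minor (namely $X_0X_{m+1}$ for the first, then $X_{m+1}X_j$ for $j\le m-3$, then $X_{m+1}^2$, then $X_{m+1}X_{m+j}$ for $j>1$) cannot occur in any element of the ideal generated by ${\bf I}_2(A_m)$ together with the previously listed minors, because no generator of that subideal contains a monomial that divides into the target one. This support argument, combined with your Stage 2 count against \Cref{thm:first betti m}, is what simultaneously yields $\IM{m}={\bf I}_2(A_m)+{\bf I}_2(B_m)$ and the minimality of the listed set. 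If you wish to keep your graded framework, you must carry out the per-degree alignment explicitly \emph{and} verify independence within each degree $\lambda$ (including the exceptional degree $\lambda=2e+2m$), at which point you would effectively be reproving the paper's monomial-support argument degree by degree.
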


\begin{proof}
    Recall that $\IM{m}$ is the kernel of the map from $\RM{m}$ to $k[\SM{m}]$ that maps $x_i$ to $t^{e+i}.$ Since the difference between the subscripts in the second and the first row is constant, $1$ in the case of the matrix $A_m$ and $2$ for the matrix $B_m$, the $2\times 2$ minors of $A_m$ and $B_m$ are automatically in the kernel. Now the $2\times 2$ minors of $A_m$ are all minimal generators of $\IM{m}$ as they are minimal generators of ${\bf I}_2(A_m).$ 
    
    However, none of the minors of $B_m$ containing the column of $\begin{bmatrix} X_{m-1}\\ X_{m+1}\end{bmatrix}$ are in ${\bf I}_2(A_m)$ and they are all part of a minimal generating system of ${\bf I}_2(B_m)$ and are minimal in $\IM{m}$. Indeed, it is clear that $X_2X_{m-1}-X_0X_{m+1}$ is not in ${\bf I}_2(A_m)$ because there is no binomial with a monomial of the form $X_0X_{m+1}$ in ${\bf I}_2(A_m).$ If we are to get $X_{m-1}X_{j+2}-X_{m+1}X_j$, $1\le j \le m-3$, from either ${\bf I}_2(A_m)$ or any minor from ${\bf I}_2(B_m)$ preceeding it, we cannot reach $X_{m+1}X_j$ at all as all these minors will involve variables of the form $X_i$, $i<m,$ from $A_m$ and $X_{m+1}X_i, i<j$. Similarly, for $j \geq 1,$ considering $X_{m-1}X_{m+j+2}-X_{m+1}X_{m+j}$, it is clear that $j=1$ is minimal since that is the only binomial where the term $X_{m+1}^2$ appears. Now for $j>1$, any term from $A_m$ involving $X_{m-1}X_{m+j+2}$ will involve terms of the form $X_{m-i}X_{m+k}$ and the ones already accounted for with $X_{m+k}, k<j+2$, will involve $X_{m+1}X_{m+k}$ $k< j$.  Thus we will never reach $X_{m+1}X_{m+j}$. 
\end{proof}

\subsection{Minimal Free Resolutions.} We construct minimal free resolutions for the cases $m=1$ and $m=2$ by identifying them as mapping cones associated to certain Eagon-Northcott complexes. As a consequence, we get formulas for their Betti sequence.

We set the following notation. Let $$F_m = \sum_{i=0, i \neq m-1,m}^{e-1} \RM{m}f_i, \quad \text{and} \q G= \RM{m}g_1 \oplus \RM{m}g_2$$ be free $\RM{m}$-modules with bases $\{f_i \mid i \in [0,e-1] \bs \{m-1,m\}\}$ and $\{g_1, g_2\}$, respectively. Let $\phi : F_m \to G$ be given by the matrix $A_m$ so that $\phi(f_i)=X_ig_1 +X_{i+1}g_2$ for all $i \in [0,e-1] \bs \{m-1,m\}$. Let $\E$ denote the Eagon-Northcott complex of $\phi$ with the $i$-th module $E_i= \wedge^{1+i}F_m \otimes D_{i-1}G^\ast$ for all $i \in [1,e-3]$.

\begin{theorem} [Resolution of $\IM{1}$] \label{thm:res of I1}
Let $\E^\ast$ be the dual of $\E$, and define $\psi : D_{e-4} G \to \RM{1}$ by $$\psi \left( g_1^j g_2^{e-4-j} \right) = (-1)^j (X_0X_{j+4}-X_2X_{j+2}), \q j \in [0,e-4].$$
This map $\psi$ extends to a map of complexes $\E^\ast \to \E$ whose mapping cone is a resolution of $\IM{1}$.
\end{theorem}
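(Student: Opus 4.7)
The plan is to realize the mapping cone construction as the Gorenstein doubling of the determinantal quotient $R^e(1)/\mathbf{I}_2(A_1)$ along its canonical module. First, I would observe, via Lemma~\ref{lem:height of m}, that $\mathbf{I}_2(A_1)$ has the expected height $e-3$ for the $2\times(e-2)$ matrix $A_1$. Hence by the classical Eagon--Northcott theorem, the complex $\E$ is a minimal free resolution of $R^e(1)/\mathbf{I}_2(A_1)$, which is therefore Cohen--Macaulay of dimension $2$. Dualizing $\E$ then produces, after the standard reversal and twist, a minimal free resolution of its canonical module $\omega$, namely $\E^\ast$.

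Next, I would identify the images $\psi(g_1^j g_2^{e-4-j}) = (-1)^j(X_0 X_{j+4} - X_2 X_{j+2})$, for $j \in [0,e-4]$, with the extra generators of $\IM{1}$. By Theorem~\ref{thm:defining ideal m}, the minimal generators of $\IM{1}$ beyond those of $\mathbf{I}_2(A_1)$ are precisely the $e-3$ minors of $B_1$ containing the column with entries $X_0$ and $X_2$, and these coincide, up to sign, with the elements in the image of $\psi$. Hence there is a short exact sequence
\begin{equation*}
    0 \longrightarrow \IM{1}/\mathbf{I}_2(A_1) \longrightarrow R^e(1)/\mathbf{I}_2(A_1) \longrightarrow R^e(1)/\IM{1} \longrightarrow 0,
\end{equation*}
and I would verify that $\IM{1}/\mathbf{I}_2(A_1) \cong \omega$ by matching the $e-3$ generating elements of $\IM{1}/\mathbf{I}_2(A_1)$ with the $e-3$ generators of $\omega$ coming from the top module $E_{e-3}^\ast \cong D_{e-4}G$ of $\E^\ast$.

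Third, the inclusion $\omega \hookrightarrow R^e(1)/\mathbf{I}_2(A_1)$ lifts to a chain map $\psi_\bullet : \E^\ast \to \E$ whose bottom component is the given map $\psi$ (under the identification $E_{e-3}^\ast \cong D_{e-4}G$ coming from $\wedge^{e-2}F_1 \cong R^e(1)$). The higher components $\psi_i : E_{e-3-i}^\ast \to E_i$ exist and are unique up to chain homotopy by the comparison theorem, since $\E^\ast$ consists of projective modules and $\E$ is acyclic in positive degrees. The long exact sequence of the mapping cone, combined with the short exact sequence above, then gives $H_0(\mathrm{Cone}(\psi_\bullet)) = R^e(1)/\IM{1}$ and $H_i(\mathrm{Cone}(\psi_\bullet)) = 0$ for $i > 0$, so $\mathrm{Cone}(\psi_\bullet)$ is a free resolution of $R^e(1)/\IM{1}$.

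The main obstacle I anticipate is verifying minimality of $\mathrm{Cone}(\psi_\bullet)$. The differentials of $\E$ and $\E^\ast$ have entries in $\mathfrak{m}$ by their Eagon--Northcott form, and $\psi_0 = \psi$ has entries in $\mathfrak{m}$ by inspection. The key remaining step is to show that each lift $\psi_i$ also has entries in $\mathfrak{m}$. I would establish this by a degree count: the internal graded degrees of the generators of $E_{e-3-i}^\ast$ strictly exceed those of $E_i$ in a way that forces every entry of $\psi_i$ to be a homogeneous element of positive degree, hence in $\mathfrak{m}$. Once minimality is secured, the ranks and twists of $\mathrm{Cone}(\psi_\bullet)$ read off the Betti numbers of $R^e(1)/\IM{1}$.
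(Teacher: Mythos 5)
Your strategy --- realizing the mapping cone as the Gorenstein doubling of the determinantal ring $\RM{1}/{\bf I}_2(A_1)$ along its canonical module $\omega$ --- is a legitimate and more structural packaging of the paper's construction; the paper never mentions $\omega$ and instead checks by hand that $\psi\circ\delta^\ast_{e-3}$ lands in ${\bf I}_2(A_1)$ and then writes the lift $\psi_1$ explicitly. But as written your argument has a gap at its central step. You propose to ``verify that $\IM{1}/{\bf I}_2(A_1)\cong\omega$ by matching the $e-3$ generating elements'' on the two sides. Matching generators establishes nothing by itself: to obtain even a well-defined homomorphism $\omega\to\RM{1}/{\bf I}_2(A_1)$ sending the class of $g_1^jg_2^{e-4-j}$ to the class of $(-1)^j(X_0X_{j+4}-X_2X_{j+2})$, you must check that the relations among the generators of $\omega=\coker(\delta^\ast_{e-3})$ are sent into ${\bf I}_2(A_1)$, i.e.\ that $\psi(X_ig_1^{j+1}g_2^{e-5-j}+X_{i+1}g_1^jg_2^{e-4-j})\in{\bf I}_2(A_1)$ for all admissible $i,j$. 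That computation is precisely the body of the paper's proof and is also what makes $\psi$ extend to a chain map; it cannot be delegated to the comparison theorem, which only applies once the map on $H_0$ exists.

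Second, even granting well-definedness you only get a surjection of $\omega$ onto $\IM{1}/{\bf I}_2(A_1)$; exactness of the mapping cone in homological degree $1$ is equivalent to injectivity of the induced map $\omega\to\RM{1}/{\bf I}_2(A_1)$, and neither generator-counting nor the comparison theorem supplies that. (The paper is also silent on this point.) The fix is available from the paper's own results: for $m=1$ the matrix $A_1$ is generic, so by \Cref{lem:height of m} the ideal ${\bf I}_2(A_1)$ is prime of height $e-3$; hence $\omega$ is a rank-one torsion-free module over the domain $\RM{1}/{\bf I}_2(A_1)$, and any nonzero map from it into that domain is injective. The map is nonzero because, by \Cref{thm:defining ideal m}, the binomials $X_0X_{j+4}-X_2X_{j+2}$ do not lie in ${\bf I}_2(A_1)$. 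With these two points supplied your doubling argument closes, and it has the merit of explaining conceptually why $k[\SM{1}]$ is Gorenstein with $\beta_{e-2}=1$. A final remark: minimality is not part of the statement you were asked to prove (it is \Cref{Betti1}), and the paper establishes it by tracking the standard-graded degrees of the entries of the maps $\psi_i$ inductively rather than by comparing generator degrees; your degree-count sketch would have to be carried out in the weighted grading to be conclusive.
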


\begin{proof}
First note that $A_1$ is a $2 \times e-2$ generic matrix and by \Cref{lem:height of m}, ${\bf I}_2(A_1)$ has height $e-3,$ implying that both $\E$ and its dual $\E^\ast$ are exact. Consider the diagram 

\[\begin{tikzcd}
	{\mathbf E: 0} & {\overset{e-2}{\wedge} F_1\otimes D_{e-4}G^\ast} & \cdots & {\overset{2}{\wedge}F_1} & {\RM{1}} \\
	{\mathbf E^\ast: 0} & {\RM{1}^\ast} & \cdots & {\overset{e-3}{\wedge} F_1^\ast \otimes D_{e-5}G} & {\overset{e-2}{\wedge} F_1^\ast \otimes D_{e-4}G}
	\arrow[from=1-1, to=1-2]
	\arrow["{\delta_{e-3}}", from=1-2, to=1-3]
	\arrow["{\delta_2}", from=1-3, to=1-4]
	\arrow["{\delta_1}", from=1-4, to=1-5]
%	\arrow[from=1-5, to=1-6]
	\arrow[from=2-1, to=2-2]
	\arrow["{\delta_1^\ast}", from=2-2, to=2-3]
	\arrow[from=2-3, to=2-4]
    \arrow["{\psi_{e-3}}", dashed, from=2-2, to=1-2]
	\arrow["{\psi_1}"', dashed, from=2-4, to=1-4]
	\arrow["{\delta^\ast_{e-3}}", from=2-4, to=2-5]
	\arrow["\psi"', from=2-5, to=1-5]
%	\arrow[from=2-5, to=2-6]
\end{tikzcd}\]
where we make the identification $\wedge^i F_1^\ast \cong \wedge^{e-2-i} F_1$ by the canonical isomorphism with appropriate signs. To show that $\psi$ extends to a map of complexes, it suffices to prove that the image of the composition $\psi \circ \delta^\ast_{e-3}$ is contained in ${\bf I}_2(A_1)$. Indeed, for some $i\in [2,e-1],$ and $j \in [0,e-5]$,
\begin{align*}
    (\psi \circ \delta_{e-3}^\ast) (f_i \otimes g_1^j g_2^{e-5-j}) &= \psi(X_i g_1^{j+1}g_2^{e-5-j} + X_{i+1}g_1^jg_2^{e-4-j}) \\
    &= (-1)^{j+1}X_j(X_0 X_{j+5}-X_2X_{j+3}) + (-1)^{j}X_{i+1}(X_0 X_{j+4}-X_2X_{j+2}) \\
    &= (-1)^j \left[X_0(X_{i+1}X_{j+4}-X_iX_{j+5})+X_2(X_iX_{j+3}-X_{i+1}X_{j+2}) \right] \\
    & \in {\bf I}_2(A_1).
\end{align*}
Hence, $\psi$ extends to a map of complexes $\E^\ast \to \E$, and thus induces the map $\psi_1$ defined by
\begin{equation}\label{eq:psi1 I2}
    \psi_1(f_i \otimes g_1^j g_2^{e-5-j}) = X_0(f_i \wedge f_{j+4}) + X_2(f_i \wedge f_{j+2}) \in \wedge^2 F_1.
\end{equation}

The mapping cone of this map of complexes is exact and is a resolution of ${\bf I}_2(A_1)+{\bf I}_2(B_1)=\IM{1}$.  
\end{proof}

\begin{corollary}\label{Betti1}
    The resolution in \Cref{thm:res of I1} is minimal. Hence, for $1 \leq t \leq e-3,$ the Betti numbers of $k[\SM{1}]$ are $$\beta_t = {{et}\over {(e-t-1)}}{e-2 \choose t+1}, $$
    and $\beta_{e-2} = 1.$
\end{corollary}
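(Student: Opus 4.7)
The plan is to (1) verify minimality of the mapping cone resolution constructed in Theorem \ref{thm:res of I1}, and (2) read off the Betti numbers from the ranks in each position, simplifying via a binomial identity.

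For minimality, I would argue that every entry in every differential of the mapping cone lies in the maximal ideal $\mathfrak{m}=(X_0,X_2,\ldots,X_{e-1})$ of $\RM{1}$. The differentials of the Eagon-Northcott complex $\E$ and its dual $\E^\ast$ are built from the entries of $A_1$, which are the variables $X_i$ (with $X_e=X_0^2$); all are in $\mathfrak{m}$. The connecting map $\psi$ has entries $X_0X_{j+4}-X_2X_{j+2}\in\mathfrak{m}^2$, and the induced map $\psi_1$ from \eqref{eq:psi1 I2} has entries $X_0,X_2\in\mathfrak{m}$. For the higher lifts $\psi_2,\ldots,\psi_{e-3}$, the cleanest argument is graded: assigning $\deg X_i=e+i$ makes the whole mapping cone a complex of graded free $\RM{1}$-modules with homogeneous differentials, and a direct check of the internal shifts on $\E_i$ and $(\E^\ast)_{i-1}$ shows that a constant (unit) entry would violate homogeneity. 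Equivalently, tensoring with $k=\RM{1}/\mathfrak{m}$ kills every differential.

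Granted minimality, the Betti numbers are the ranks of the mapping cone modules. At position $t\in[1,e-3]$ the module is $\E_t\oplus(\E^\ast)_{t-1}$, where
\[ \operatorname{rank}(\E_t)=\operatorname{rank}\bigl(\wedge^{t+1}F_1\otimes D_{t-1}G^\ast\bigr)=t\binom{e-2}{t+1},\]
and since $(\E^\ast)_{t-1}\cong(\E_{e-2-t})^\ast$,
\[ \operatorname{rank}((\E^\ast)_{t-1})=(e-2-t)\binom{e-2}{e-1-t}=(e-2-t)\binom{e-2}{t-1}.\]
At the top position $t=e-2$ only $(\E^\ast)_{e-3}=(\E_0)^\ast\cong\RM{1}$ contributes, giving $\beta_{e-2}=1$.

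Finally, I would simplify
\[ \beta_t=t\binom{e-2}{t+1}+(e-2-t)\binom{e-2}{t-1}\]
using the identity $(e-2-t)\binom{e-2}{t-1}=\dfrac{t(t+1)}{e-1-t}\binom{e-2}{t+1}$, which is immediate from expanding both sides in factorials. Factoring $t\binom{e-2}{t+1}$ yields
\[ \beta_t=t\binom{e-2}{t+1}\left(1+\frac{t+1}{e-1-t}\right)=\frac{et}{e-t-1}\binom{e-2}{t+1}, \]
as claimed. The main obstacle is the minimality check: the formulas for $\psi$ and $\psi_1$ are explicit, but the higher $\psi_i$ are only guaranteed to exist by general homological algebra, so one must rule out unit entries by an intrinsic (graded) argument rather than by direct inspection.
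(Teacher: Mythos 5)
Your proposal is correct in outline and reaches the stated formulas by the same mechanism as the paper: show the mapping cone is minimal, then read off $\beta_t=\operatorname{rank}(\E_t)+\operatorname{rank}\bigl((\E^\ast)_{t-1}\bigr)=t\binom{e-2}{t+1}+(e-2-t)\binom{e-2}{e-1-t}$ and simplify; your binomial manipulation and the identification $\beta_{e-2}=\operatorname{rank}(\RM{1}^\ast)=1$ agree exactly with the paper. The only substantive divergence is in how minimality of the higher lifts $\psi_2,\dots,\psi_{e-3}$ is justified, and this is also the only place your argument is thinner than the paper's. The paper does not invoke the weighted ($S$-graded) internal shifts at all; it runs an induction in the \emph{standard} grading: the entries of $\psi_1$ are the linear forms $X_0,X_2$, the entries of the differentials $\delta_i$, $i\ge 2$, of $\E$ and $\E^\ast$ are $X_i$ ($i\ge 2$) or $X_0^2$, so each composite $\psi_{i-1}\circ\delta^\ast$ has entries of standard degree $2$ or $3$, which forces the lift $\psi_i$ to have entries of degree $1$. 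Your alternative --- that a unit entry in some $\psi_i$ would force a coincidence of internal degrees between a generator of $(\E^\ast)_{i-1}$ and one of $\E_i$ --- is a legitimate and arguably cleaner strategy (it proves minimality for \emph{any} homogeneous lift), but the claim that ``a direct check of the internal shifts shows'' no such coincidence occurs is exactly the nontrivial verification, and you have asserted it rather than performed it: the generator degrees of $\E_t$ and of the twisted dual $(\E_{e-2-t})^\ast$ both cluster near $(t+1)(e+1)$, so their disjointness depends on the finer combinatorics of the shifts (in small examples the gap turns out to be roughly $\deg X_j$ for the variables appearing in $\psi_1$, consistent with linear entries). If you want to keep the graded route, you should actually compute the two sets of internal degrees and exhibit the disjointness; otherwise the paper's standard-degree induction is the shorter way to close this step.
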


\begin{proof}
    Let $\psi: \mathbf E^\ast \to \mathbf E$ be the map of complexes as in \Cref{thm:res of I1}. With the bases of $F_{1}$ and $G$ as in the notation, the non-zero entries in the matrix of $\psi_1$ are $X_0$ and $X_2$, both of degree $1$ in the standard grading.  The entries in $\delta_i,$ $i\ge 2$ in $E$ and $E^*$ have $X_i, i\ge 2$ and $X_0^2$.  Thus, $\psi_{i-1} \circ \delta_{e-2-i}^* $ will have coefficients of degree $2$ when they do not involve $X_0$ and of degree $3$ when it involves $X_0$.  Hence, the corresponding $\psi_i$ must have degree one terms.  It follows that the non-zero entries in the matrices of $\psi_i, i\ge 1$ must all be of degree $1$, and hence, the resolution is minimal. 

For $t \in [1,e-3],$ the $t$-th free module in the resolution is $E_t\oplus E^\ast_{e-2-t}$. Hence the Betti numbers are $$\beta_t = t{e-2\choose t+1}+(e-2-t){e-2\choose e-1-t} = {{et}\over {(e-t-1)}}{e-2 \choose t+1}.$$
Since $\RM{1}^*$ is the free module at the $(e-2)$-nd spot in the mapping cone, we get $\beta_{e-2}=1.$ 
\end{proof}

\begin{remark}   
Note that when $m=e-1,$ $\SM{e-1} = \la e,e+1,\ldots,2e-2 \ra$ is a numerical semigroup generated by an arithmetic sequence, with multiplicity $e,$ width $e-2,$ and embedding dimension $e-1.$ Minimal free resolution of such semigroups is computed in \cite{GimSenSri}. In particular, with the setup as described below, one can obtain a minimal free resolution of $\IM{e-1}$ as in the \Cref{thm:res of I1} by making the replacement $$\psi(g_1^j g_2^{e-4-j}) = (-1)^j(X_eX_{j} -X_{e-2}X_{j+2}), \q j \in [0,e-4].$$

The map $\delta^\ast_{e-3}$ remains the same, and one can check that $$\psi \circ \delta_{e-3}^\ast (f_i \otimes g_1^j g_2^{e-5-j}) = (-1)^{j}[X_{e-2}(X_iX_{j+3}-X_{i+1}X_{j+2})+X_{e}(X_{i+1}X_j - X_iX_{j+1})]\in {\bf I}_2(A_{e-1}).$$ 
\end{remark}

We now compute a minimal free resolution of $\IM{2}.$ Recall that when $m \in \{1,e-1\},$ $\SM{m}$ is a symmetric numerical semigroup, implying that the Betti numbers are symmetric (that is, $\beta_t = \beta_{e-2-t}$ for $t \in [0,e-2]$). Since this is not the case when $m=2,$ the construction of the resolution for this case doesn't follow in parallel. 

\begin{theorem}[Resolution of $\IM{2}$] \label{thm:res of Im} 
Let $\mathbf E'$ be the Buchsbaum-Rim complex associated to the $2\times (e-2)$ matrix 
$$
\begin{bmatrix}
    X_0^2   & X_3  & \cdots & X_{e-2}&  X_{e-1} \\
    X_0X_1  & X_4  & \cdots  &X_{e-1}   & X_0^2= X_e\\
\end{bmatrix},$$
and $\mathbf E$ be the Eagon-Northcott complex associated to $A_2.$
Recall that $F_2 = \sum_{i=0, i \neq 1,2}^{e-1} \RM{2}f_i$ and $G= \RM{2}g_1 \oplus \RM{2}g_2$ are free $\RM{2}$-modules of rank $e-2$ and $2$ respectively. Then the $i$-th module of $\mathbf E'$ is $E_i'=\wedge^{i+1} F_2 \otimes D_iG^\ast$, $i \in [0,e-3]$, so that $E_0'=F_2 \cong \RM{2}^{e-2}$. Define $\psi: F_2 \to \RM{2}$ by 
$$\psi(f_j)=\begin{cases}
    X_{3}X_{j} - X_{1}X_{j+2}, &3\le j \le e-1  \\
    X_0^2X_3-X_1^3, & j=0.
\end{cases}$$
Then $\psi$ extends to a map of complexes $\E' \to \E$ whose mapping cone is a resolution of $\IM{2}$.
\end{theorem}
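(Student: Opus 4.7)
The plan is to mirror the mapping-cone template from the proof of Theorem \ref{thm:res of I1}: construct a chain map between two acyclic complexes whose mapping cone is a free resolution of $\RM{2}/\IM{2}$. Since $\SM{2}$ is not symmetric, the dual complex $\E^\ast$ used for $m=1$ is unavailable, so I would pair $\E$ with the Buchsbaum-Rim complex $\E'$ of the displayed matrix $\Phi$. The map $\psi$ is designed so that its values enumerate exactly the $2\times 2$ minors of $B_2$ containing the special column $\begin{bmatrix}X_1\\X_3\end{bmatrix}$: for $j\in[3,e-1]$, $\psi(f_j)=X_3X_j-X_1X_{j+2}$, while $\psi(f_0)=X_0^2X_3-X_1^3=X_3X_e-X_1X_{e+2}$ under the conventions $X_e=X_0^2$, $X_{e+2}=X_1^2$. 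By Theorem \ref{thm:defining ideal m}, these are precisely the generators of $\IM{2}$ beyond ${\bf I}_2(A_2)$.

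First I would verify that both $\E$ and $\E'$ are acyclic. For $\E$, this follows from Lemma \ref{lem:height of m}, which gives $\height{{\bf I}_2(A_2)}=e-3$. For $\E'$, note that the first column of $\Phi$ equals $X_0\cdot(X_0,X_1)^T$, so every maximal minor of $\Phi$ involving the first column is $X_0$ times the corresponding minor of $A_2$, while the remaining minors coincide with those of $A_2$; a direct check (or a comparison with the auxiliary matrix $A'_m$ from the proof of Lemma \ref{lem:height of m}) shows ${\bf I}_2(\Phi)$ still attains the maximal height $e-3$, so the Buchsbaum-Rim acyclicity criterion applies. The central step is then to extend $\psi$ to a chain map $\psi_\bullet:\E'\to\E$. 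By the comparison theorem for acyclic complexes, it suffices to verify $\psi\circ\delta_1^{E'}(E'_1)\subseteq{\bf I}_2(A_2)$. A model computation: for $j,k\in[3,e-1]$, applying $\psi$ to the Koszul-type differential yields
\begin{align*}
\psi(\delta_1^{E'}(f_j\wedge f_k\otimes g_1^\ast))
&=X_j(X_3X_k-X_1X_{k+2})-X_k(X_3X_j-X_1X_{j+2})\\
&=X_1\bigl((X_kX_{j+2}-X_{k+1}X_{j+1})-(X_jX_{k+2}-X_{j+1}X_{k+1})\bigr),
\end{align*}
a difference of two $2\times 2$ minors of $A_2$ scaled by $X_1$. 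The edge cases ($g_2^\ast$ direction, the index $j=0$ where $\Phi_{1,0}=X_0^2$ and $\psi(f_0)$ is cubic, and the last column where $X_e=X_0^2$) produce analogous identities after similar rewriting; once $\psi_1$ is defined, the higher $\psi_i$ exist by iterated lifting against the acyclic complex $\E$.

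Finally, the mapping cone of $\psi_\bullet$ is a free resolution of $\RM{2}/\IM{2}$: combining the short exact sequence
$$0 \to \IM{2}/{\bf I}_2(A_2) \to \RM{2}/{\bf I}_2(A_2) \to \RM{2}/\IM{2} \to 0$$
with the acyclicity of $\E$ and $\E'$ and the fact that $\bar\psi$ (the map on cokernels induced by $\psi$) surjects onto $\IM{2}/{\bf I}_2(A_2)$ by Theorem \ref{thm:defining ideal m}, the cone has vanishing homology in positive degrees and $H_0=\RM{2}/\IM{2}$. The main obstacle is expected to be the case analysis at the first lifting level, particularly the treatment of the first column of $\Phi$ (with $X_0^2$ instead of $X_0$) and the last column (where $X_e=X_0^2$), together with the cubic generator $\psi(f_0)=X_0^2X_3-X_1^3$: these introduce entries of mixed degree in $\psi_\bullet$, explaining why the resulting resolution will not be linear in the standard grading.
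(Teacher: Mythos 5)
Your proposal matches the paper's proof in all essentials: the same mapping cone of a chain map $\psi_\bullet\colon \E'\to\E$ from the Buchsbaum--Rim complex of the displayed matrix to the Eagon--Northcott complex of $A_2$, with acyclicity secured by the height-$(e-3)$ computations and the lifting verified by showing $\psi\circ\delta_1'$ lands in ${\bf I}_2(A_2)$. The only difference is one of completeness: the edge cases you defer (the index $i=0$, the last column where $X_e=X_0^2$, and the $g_2$ direction) are exactly where the bulk of the paper's proof lives, each being worked out explicitly together with the resulting formula for $\psi_1$ (which is then needed to prove minimality in Corollary \ref{Betti2}).
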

\begin{proof}
    From \Cref{lem:height of m}, we know that the height of ${\bf I}_2(A_2)$ is $e-3.$ Using similar arguments, we can show that the height of the size 2 minors of the above matrix is $e-3.$ Thus, $\E$ and $\E'$ are exact, and we have the following diagram with exact rows:

\[\begin{tikzcd}
	{\mathbf E: 0} & {\overset{e-2}{\wedge} F_2\otimes D_{e-4}G^\ast} & \cdots & {\overset{2}{\wedge}F_2} & {\RM{2}} \\
	\E': 0 & {\overset{e-2}{\wedge} F_2\otimes D_{e-3}G^\ast} & \cdots & {\overset{2}{\wedge}F_2} \otimes D_1G^\ast & F_2
	\arrow[from=1-1, to=1-2]
	\arrow["{\delta_{e-3}}", from=1-2, to=1-3]
	\arrow["{\delta_2}", from=1-3, to=1-4]
	\arrow["{\delta_1}", from=1-4, to=1-5]
%	\arrow[from=1-5, to=1-6]
	\arrow[from=2-1, to=2-2]
	\arrow["{\delta_{e-3}'}", from=2-2, to=2-3]
	\arrow["{\delta_2'}", from=2-3, to=2-4]
	\arrow["{\psi_1}"', dashed, from=2-4, to=1-4]
    \arrow["{\psi_{e-3}}", dashed, from=2-2, to=1-2]
	\arrow["{\delta_1'}", from=2-4, to=2-5]
	\arrow["\psi"', from=2-5, to=1-5]
%	\arrow[from=2-5, to=2-6]
\end{tikzcd}\]

    To prove that $\psi$ extends to a map of complexes $\E' \to \E$, we will establish that $\psi \circ \delta_1' (f_i \wedge f_j \otimes g_l) \in {\bf I}_2(A_2)$ for all $i,j \in \{0\} \cup [3,e-1],$ and $l \in \{1,2\}$.

    If $i=j$, then $\psi \circ \delta'=0 \in {\bf I}_2(A_2)$. If $i=0,$ then for all $j \in [3,e-1]$,
    \begin{align*}
        \psi \circ \delta_1' (f_0 \wedge f_j \otimes g_1) &= \psi(X_{0}^2f_j-X_{j}f_0) \\
        &= X_{0}^2(X_{3}X_{j} -X_{1}X_{j+2})-X_{j}(X_0^2X_{3}- X_{1}^3) \\
        &= X_1(X_1^2X_{j}-X_0^2X_{j+2}) \in {\bf I}_2(A_2)
\end{align*}
with
\begin{equation}\label{eq:psi1 f0 g1 I2}
    \psi_1(f_0 \wedge f_j \otimes g_1)= \begin{cases}
        X_0X_{1}(f_0 \wedge f_{j+1}) + X_1^2(f_0 \wedge f_j), & j \leq e-2 \\
        X_1^2 (f_0 \wedge f_{e-1}), & j=e-1
    \end{cases}
\end{equation}
and \begin{align*}
        \psi \circ \delta_1' (f_0 \wedge f_j \otimes g_2) &= \psi(X_0X_1f_j-X_{j+1}f_0) \\
        &= X_{0}X_1(X_{3}X_{j} -X_{1}X_{j+2})-X_{j+1}(X_0^2X_{3}- X_{1}^3) \\
        &= X_0X_3(X_1X_{j}-X_0X_{j+1}) + X_1^2(X_1X_{j+1}-X_0X_{j+2}) \in {\bf I}_2(A_2)
\end{align*}
with
\begin{equation}\label{eq:psi1 f0 g2 I2}
    \psi_1(f_0 \wedge f_j \otimes g_2)= \begin{cases}
        X_0X_3(f_0 \wedge f_{j}) + X_1^2(f_0 \wedge f_{j+1}), & j \leq e-2 \\
        X_0X_3 (f_0 \wedge f_{e-1}), & j=e-1.
    \end{cases}
\end{equation}

Suppose $3\leq i<j \leq e-1$. Then 
\begin{align*}
        \psi \circ \delta_1' (f_i \wedge f_j \otimes g_1) &= \psi(X_{i}f_j-X_{j}f_i) \\
        &= X_{i}(X_{3}X_{j} -X_{1}X_{j+2})-X_{j}(X_{3}X_{i} - X_{1}X_{i+2}) \\
        &= X_1(X_{i+2}X_j-X_iX_{j+2})\in {\bf I}_2(A_2)
\end{align*}
with 
\begin{equation}\label{eq:psi1 g1 I2}
    \psi_1(f_i \wedge f_j \otimes g_1)= \begin{cases}
        X_{1}(f_i \wedge f_{j+1} + f_{i+1}\wedge f_j), & i<j \leq e-2 \\
        X_1(f_{i+1} \wedge f_{j}) + X_0X_1 (f_0 \wedge f_i), & i \leq e-3, j=e-1 \\
        X_0X_1(f_0 \wedge f_i), & i=e-2, j=e-1.
    \end{cases}
\end{equation}

Similarly, 
    \begin{align*}
        \psi \circ \delta_1' (f_i \wedge f_j \otimes g_2) &= \psi(X_{i+1}f_j-X_{j+1}f_i) \\
        &= X_{i+1}(X_{3}X_{j} -X_{1}X_{j+2})-X_{j+1}(X_{3}X_{i} - X_{1}X_{i+2}) \\
        &= X_3(X_{i+1}X_{j}-X_{i}X_{j+1})+X_1(X_{i+2}X_{j+1}-X_{i+1}X_{j+2}) \in {\bf I}_2(A_2) 
\end{align*}
and 
\begin{equation}\label{eq:psi1 g2 I2}
    \psi_1(f_i \wedge f_j \otimes g_2)= X_3(f_{i}\wedge f_{j}) + X_1(f_{i+1} \wedge f_{j+1}).
\end{equation}

Hence, we can deduce that $\psi$ extends to a map of complexes $\E' \to \E$. The mapping cone of this map of complexes is exact and is a resolution of $\IM{1}$.  
\end{proof}

\begin{corollary}\label{Betti2}
    The resolution in \Cref{thm:res of Im} is minimal. Hence, the Betti numbers of $k[\SM{2}]$ are $$\beta_t = t {e-1 \choose t+1}, \quad 1 \leq t \leq e-2.$$
\end{corollary}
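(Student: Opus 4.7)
The plan is to mirror the approach of Corollary \ref{Betti1}: verify minimality of the mapping cone constructed in Theorem \ref{thm:res of Im} by a standard-grading degree argument, and then compute the Betti numbers by summing ranks at each homological position.

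For minimality, I would first observe that every entry of the Eagon-Northcott differentials $\delta_i$ and the Buchsbaum-Rim differentials $\delta_i'$ is either a single variable $X_j$ or a degree-two monomial such as $X_0^2$ or $X_0X_1$, all lying in the maximal ideal $\mathfrak{m}$ of $\RM{2}$. The map $\psi$ at the bottom of the cone has entries $X_3X_j - X_1X_{j+2}$ and $X_0^2X_3 - X_1^3$, again in $\mathfrak{m}$. The explicit formulas \Cref{eq:psi1 f0 g1 I2}--\Cref{eq:psi1 g2 I2} derived in the proof of Theorem \ref{thm:res of Im} show that every entry of $\psi_1$ is one of $X_1$, $X_3$, $X_0X_1$, $X_1^2$, or $X_0X_3$, all in $\mathfrak{m}$. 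For the higher $\psi_i$ with $i \ge 2$, I would argue inductively from the commutativity identity $\delta_i \circ \psi_i = \psi_{i-1} \circ \delta_i'$: the composite $\psi_{i-1}\circ \delta_i'$ has standard degree at least $2$, while $\delta_i$ raises standard degree by exactly one, so $\psi_i$ is forced to have entries of positive standard degree, hence in $\mathfrak{m}$. Consequently no entry of the mapping-cone matrix is a unit, and the resolution is minimal.

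For the Betti numbers, at homological position $t$ the mapping cone is $E_t \oplus E'_{t-1}$, with the convention $E_{e-2} = 0$. For $1 \le t \le e-3$, $E_t = \wedge^{t+1} F_2 \otimes D_{t-1} G^\ast$ has rank $t\binom{e-2}{t+1}$ and $E'_{t-1} = \wedge^{t} F_2 \otimes D_{t-1} G^\ast$ has rank $t\binom{e-2}{t}$. Pascal's identity then gives
$$\beta_t = t\binom{e-2}{t+1} + t\binom{e-2}{t} = t\binom{e-1}{t+1}.$$
At $t = e-2$, only $E'_{e-3}$ survives, of rank $e-2 = (e-2)\binom{e-1}{e-1}$, consistent with the same formula.

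The main potential obstacle is making the minimality argument for $\psi_i$, $i \ge 2$, airtight, since these higher maps are not written down explicitly in Theorem \ref{thm:res of Im}. The standard-grading degree bookkeeping of Corollary \ref{Betti1} should adapt without difficulty, provided one keeps careful account of which entries of $\delta_i'$ have standard degree $2$ (namely $X_0^2$ and $X_0X_1$) rather than $1$; with that in place, the Betti number formula follows by the routine rank count above.
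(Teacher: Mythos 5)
Your proposal is correct and follows essentially the same route as the paper: minimality via standard-grading degree bookkeeping on the lifted maps $\psi_i$, then the rank count $\beta_t = t\binom{e-2}{t+1} + t\binom{e-2}{t} = t\binom{e-1}{t+1}$ via Pascal's identity. The one spot where your phrasing is loose --- ``$\delta_i$ raises standard degree by exactly one,'' which fails for the degree-two entries $X_0^2$ and $X_0X_1$ --- is precisely the point the paper handles by observing that the degree-two monomials occurring in $\psi_1$ (namely $X_0X_1$, $X_0X_3$, $X_1^2$) cannot appear in any binomial of ${\bf I}_2(A_2)$, and you correctly flag this as the remaining bookkeeping.
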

\begin{proof}
 Let $\psi: \mathbf E'\to \mathbf E$ be the map of complexes as in the \Cref{thm:res of Im}.  If we choose bases for $F_{2}$ and $G$ as above, the non-zero  entries in the matrix of $\psi_1$ are of degree $1$ or $2$ in the standard grading as can be seen from equations \ref{eq:psi1 f0 g1 I2}, \ref{eq:psi1 f0 g2 I2}, \ref{eq:psi1 g1 I2}, and \ref{eq:psi1 g2 I2}. The only entries of degree $2$ are of the form $X_0X_1, X_0X_3,$ or $X_1^2$ and these monomials cannot be a part of any binomial in ${\bf I}_2(A_2)$.  Now, the nonzero entries in the matrices of the resolutions $\mathbf E$ and $\mathbf E'$ from $E_2$ onwards are again of degree $1$.  It follows by induction that the nonzero entries in the matrices of $\psi_i, i\ge 1$ are either degree $1$ monomials, or degree $2$ monomials that do not appear in any binomials in ${\bf I}_2(A_2)$. Hence, the resolution is minimal. 

The $t$-th free module in the resolution is $E_t\oplus E'_{t-1}.$ Thus, the Betti numbers are 
\[\beta_t = t{e-2 \choose t+1}+ t {e-2 \choose t}= t{e-1 \choose t+1}. \qedhere\]  
\end{proof}

\section{Structure of $\SM{m,n}$}

In the following sections, we study Sally type numerical semigroups of the form $\SM{m,n}$ and prove results analogous to Section \ref{sec:drop1}.

\subsection{The Numerical Semigroup $\SM{m,n}$}

For $1 \leq m < n \leq e-1$, $\SM{m,n}$ is the numerical semigroup generated by $[e, 2e-1] \bs \{e+m,e+n\}$. We begin by computing its Frobenius number and Cohen-Macaulay type.
\begin{remark}  \label{rem:e at least 6}
    We assume that $e\geq 6$ since if $e=4$ the semigroup is symmetric and if $e=5$ we have a space monomial curve and the properties of interest are known in these cases \cite{Herzog}.
\end{remark}

\begin{theorem}
The Frobenius number, $F(\SM{m,n})$, of $\SM{m,n}$ is
    $$F(\SM{m,n})= \begin{cases}
        2e+3, & (m,n) = (2,3) \hskip33pt (i) \\
        2e+n, & m=1, \ n\in \{2,3\} \hskip18pt (ii)\\
        2e+1, & m=1, \ n \geq 4  \hskip36pt  (iii)\\
        e+n, & \text{ otherwise. } \hskip44pt  (iv)
        \end{cases} $$
\end{theorem}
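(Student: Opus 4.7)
The plan is to apply the same strategy that established the Frobenius number of $\SM{m}$: check that the proposed $F(\SM{m,n})$ is not in the semigroup and that the next $e$ consecutive integers all are, which suffices because the multiplicity of $\SM{m,n}$ is $e$. The theorem then reduces to four parallel case analyses, one for each branch of the piecewise formula.

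For the non-membership of $F$, the structural fact is that elements of $\SM{m,n}$ in $[e, 2e-1]$ are exactly the generators, while elements in $[2e, 3e-1]$ are precisely the integers of the form $(e+a) + (e+b)$ with $a, b \in [0, e-1] \setminus \{m, n\}$. In case (iv), the candidate $F = e + n$ lies below $2e$ and is not a generator, so it is excluded immediately. In cases (i), (ii) and (iii) the candidate $F$ lies in $[2e, 3e-1]$, and the small residue $F - 2e \in \{1, 2, 3\}$ admits only a short list of unordered pairs summing to it; one checks that every such pair meets $\{m, n\}$. For instance, in case (i) the only pairs summing to $3$ are $\{0, 3\}$ and $\{1, 2\}$, both of which intersect $\{2, 3\}$; in case (iii) the only pair summing to $1$ is $\{0, 1\}$, which contains $m = 1$.

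For the membership step we must cover $[F+1, F+e]$. In case (iv), with interval $[e+n+1, 2e+n]$, the values in $[e+n+1, 2e-1]$ are themselves generators, and $s = 2e + \alpha$ for $\alpha \in [0, n]$ is written as $e + (e + \alpha)$ when $\alpha \notin \{m, n\}$, as $(e+1) + (e+m-1)$ when $\alpha = m$, and as $(e+1) + (e+n-1)$ or $(e+2) + (e+m-1)$ when $\alpha = n$, the second choice being forced exactly when $n = m+1$. In cases (ii) and (iii) the analogous decompositions handle $s < 3e$, and one additionally realizes $s = 3e + i$ as a pair $(e+a) + (e+b)$ with $a + b = e + i$ avoiding $\{1, n\}$, which is possible for all required $i$ provided $e \geq 6$ by Remark~\ref{rem:e at least 6}. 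In case (i) the new values $3e, 3e+1, 3e+2, 3e+3$ in the interval are all sums of three elements of $\{e, e+1\} \subseteq \SM{2,3}$.

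I expect the hardest point to be the subcase $\alpha = n$ with $n = m+1$ inside case (iv): the natural pair $(e+1) + (e+n-1)$ fails because $n - 1 = m$, and the back-up $(e+2) + (e+m-1)$ only works once $m \geq 3$. This is precisely where the exclusion $(m, n) \neq (2, 3)$ is forced, and keeping the sub-case splits straight without gaps is the most delicate part of the argument. A recurring secondary pitfall is the need to choose pairs summing to $e + i$ that avoid both $1$ and $n$ simultaneously; extreme positions of $n$, such as $n = e - 1$, can require a secondary choice of pair, and in every such instance the assumption $e \geq 6$ provides enough room in $[0, e-1] \setminus \{m, n\}$ to find a valid pairing.
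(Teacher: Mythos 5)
Your proposal is correct and follows essentially the same strategy as the paper: verify that the candidate $F(\SM{m,n})$ is a gap and that the next $e$ consecutive integers all lie in the semigroup, using explicit decompositions as sums of two (or three) generators, with the same delicate sub-cases (the $(m,n)=(2,3)$ exclusion when $n=m+1$, and alternate pairings for extreme $n$) handled correctly. The only difference is one of presentation: the paper delegates cases (i) and most of (iv) to \cite[Proposition 2.2]{DGS3} and writes out only the $m=1$ cases, whereas you carry out all four branches directly by the same method.
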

\begin{proof}
    Parts (i) and (iv) for $m \ne 1$ and $n \leq e-2$ are proven in \cite[Proposition 2.2]{DGS3}. One can use similar arguments to prove (iv) for $n=e-1$ and $m \neq 1.$

    Note that when $m=1$, $2e+1 \not\in \SM{1,n}$. Moreover, when $n\geq 4,$ for any integer $y \in [1,e],$ $2e+1+y = e+(e+1+y) \in \SM{1,n}$ unless $y+1=n$ or $y=e.$ When $y+1=n$, 
    $$2e+n = (e+2) + (e+n-2) \in \SM{1,n}$$
    and when $y=e,$ 
    \[ 2e+1+y = 3e+1 = (e+2) + (2e-1) = (e+3) + (2e-2) \in \SM{1,n}. \]
    However, when $n\in \{2,3\}$, $2e+n \not\in \SM{1,n}$. Again, let $y \in [1,e]$ be an integer. If $n=2$, then $$2e+n+y = \begin{cases}
        e+ (e+y+2), &  y \in [1, e-2] \\
        (e+3) + (e+e-2), & y=e-1 \\
        (e+3) + (e+e-1), & y=e
    \end{cases}$$ all of which lie in $\SM{1,2}$. If $n=3,$ then $$2e+n+y = \begin{cases}
        (e+2) + (e+y+1), &  y \in [1,e-1] \bs \{2\} \\
        e + (e+5), & y = 2 \\
        (e+4) + (e+e-1), & y=e 
    \end{cases}$$ all lying in $\SM{1,3}$.
\end{proof}

\begin{theorem} \label{thm:type m,n}
For $1 \leq m < n \leq e-1$, the Cohen-Macaulay type of $\SM{m,n}$ is,
$$t(\SM{1,n}) = \begin{cases}
        2, & n\in \{2,3\} \bigcup \left[ \frac{e+1}{2}, e-1 \right] \bs \lbrace \frac{e+2}{2} \rbrace \\
        3, &  n \in \left[ 4, \frac{e+1}{2} \right) \bigcup \lbrace \frac{e+2}{2} \rbrace 
    \end{cases}, \q t(\SM{m\geq 2,n}) = \begin{cases}
        1, & (m,n)=(2,3) \\
        e-n+1, & n \leq 2m, n \neq 3 \\
        e-n+2, & n>2m.
    \end{cases}$$
In particular (and in light of Remark \ref{rem:e at least 6}), $\SM{m,n}$ is symmetric if and only if $(m,n)=(2,3)$ and $\SM{m,n}$ is almost symmetric if and only if $m \geq 2,$ and $n>2m.$ 
\end{theorem}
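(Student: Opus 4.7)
The plan is to determine $\PF(\SM{m,n})$ explicitly in each case, since $t(\SM{m,n}) = |\PF(\SM{m,n})|$. I use the standard criterion that $x \in \PF(S)$ iff $x \notin S$ and $x + (e+i) \in S$ for every minimal generator $e+i$, $i \in [0, e-1] \setminus \{m, n\}$. Candidates come from the gap set of $\SM{m,n}$, which the Frobenius computation just performed identifies explicitly: the gaps are $[1, e-1] \cup \{e+m, e+n\}$ when $m \geq 2$ and $(m,n) \neq (2,3)$, with additional gaps $2e+1$ (always) and $2e+n$ (when $n \in \{2, 3\}$) appearing in the $m = 1$ case.

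The cleanest case is $m \geq 2$ with $(m,n) \neq (2,3)$. Here $e + n = F$ is automatically in $\PF$, and $e + m \in \PF$ since $(e+m) + (e+i) \geq 2e + m > F$ for every valid $i$. For $x \in [1, e-1]$, $x \in \PF$ iff no valid $i$ satisfies $x + i \in \{m, n\}$. A short check on $i = m - x$ and $i = n - x$, verifying when each lies in $[0, e-1] \setminus \{m, n\}$, shows this is equivalent to $x > m$ together with either $x > n$ or $x = n - m$. Thus $\PF \cap [1, e-1] = [n+1, e-1]$ in general, with the extra element $n - m$ precisely when $n > 2m$. Adding $e+m$ and $e+n$ gives the counts $e - n + 1$ and $e - n + 2$.

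For $(m,n) = (2,3)$, this is Sally's original Gorenstein semigroup, hence symmetric of type $1$. For $m = 1$, I would split into the sub-cases $n \geq 4$ (where $F = 2e+1$) and $n \in \{2, 3\}$ (where $F = 2e + n$). In the $n \geq 4$ case, the extra gap $2e+1$ introduces a third forbidden equation $x + i = e + 1$ alongside $x + i \in \{1, n\}$. Combining the three index constraints forces $x = e + 1 - n$ as the only candidate in $[2, e-1]$, and this candidate survives exactly when $e + 1 - n = n - 1$ (so $n = (e+2)/2$, requiring $e$ even) or $e + 1 - n > n$ (so $n < (e+1)/2$). Together with the easily verified PF elements $e + n$ and $2e + 1$, this yields type $3$ when the candidate survives and type $2$ otherwise, matching the stated intervals exactly. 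The $n \in \{2, 3\}$ sub-cases reduce to a direct verification that no $x \in [1, e-1]$ is PF and that $\PF = \{F, 2e+1\}$, giving type $2$.

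The main obstacle is the index bookkeeping in the $m = 1$, $n \geq 4$ case: three forbidden values must be simultaneously avoided while $i$ is constrained to $[0, e-1] \setminus \{1, n\}$, and the survival condition on $e + 1 - n$ splits by the parity of $e$. Once $F$, $t$, and $|G|$ are in hand in each case, the symmetric and almost-symmetric characterizations in the ``in particular'' clause follow from the identity $|G(S)| = (F + t(S))/2$ combined with the parity of $F$.
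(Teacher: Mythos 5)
Your proposal is correct and follows essentially the same route as the paper: both compute $\PF(\SM{m,n})$ gap by gap, testing each candidate $x$ by solving $x+i=m$, $x+i=n$ (and, for $m=1$, $x+i=e+1$) for $i$ and checking whether the resulting index is an admissible generator, which isolates the single extra candidate $n-m$ (resp.\ $e-n+1$) and yields the identical case split. The only places your sketch is lighter than the paper are the one-line checks that $e+m\notin\PF$ when $m=1$ and the explicit gap counts needed to settle the almost-symmetric clause, both routine within your framework.
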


\begin{proof}
Recall that $t(\SM{m,n}) = |\PF(\SM{m,n})|$. 

\textbf{Case 1}: Suppose $m=1.$ Set $V = [1,e-1] \cup \{e+1\}.$ Note that $V \subseteq G(\SM{1,n})$ for any $n.$ We claim that for any $x \in V \bs \{e-n+1\},$ $x$ is not a pseudo-Frobenius number of $\SM{1,n}$. For $x \in [1,e-1] \setminus \{e-n+1\},$ there exists $j \in [1,e-1]\setminus \{n-1\}$ with $x=e-j$ such that $e+(j+1)\in \SM{1,n}$ but $x+e+(j+1)=2e+1 \notin \SM{1,n}.$ Therefore, $x \notin \PF(\SM{1,n}).$ It is easy to check that for $x = e+1,$ as $e+x = 2e+1 \notin \SM{1,n},$ we get $x \notin \PF(\SM{1,n}).$ 

Additionally, let $n \in \{2,3\}.$ Then $G({\SM{1,n}})= V \cup \{e+n,2e+1,2e+n\}$ as $2e+n$ is the Frobenius number of $\SM{1,n}$ in this case. We know that no element of $V \bs \{e-n+1\}$ is a pseudo-Frobenius element. For $x=e-n+1,$ we have $e+(2n+1) \in \SM{1,n}$ (as $n=2,3$ and $e\geq 6$) whereas $x+e+(2n-1) \notin \SM{1,n},$ implying that $e-n+1 \notin \PF(\SM{1,n}).$ Moreover, as $e+(e+n) = 2e+n \notin \SM{1,n},$ we get $e+n \notin \PF(\SM{1,n}).$ The gap $2e+1 \in \PF(\SM{1,n})$ since  for any nonzero $y \in \SM{1,n}$, 
$$2e+1+y>2e+n$$ and as $2e+n = F(\SM{1,n}),$ it is clearly is a pseudo-Frobenius number. Thus, in this case, $\PF(\SM{1,n}) = \{2e+1,2e+n\}$ implying that $t(\SM{1,n}) = 2.$

Suppose $m=1$, and $n \neq 2,3.$ Then $G({\SM{1,n}}) = V \cup \{e+n,2e+1\}.$ As above, we know that no element of $V \bs \{e-n+1\}$ is a pseudo-Frobenius element. On the other hand, for any nonzero element $y\in \SM{1,n},$
$$y+(e+n)>2e+1=F(\SM{1,n})$$ 
so $e+n \in \PF(\SM{1,n})$. Since $2e+1$ is the largest gap of $\SM{1,n},$ it is a pseudo-Frobenius element. We now claim that $e-n+1 \in \PF(\SM{1,n})$ if and only if $n<(e+1)/2$ or $n=(e+2)/2.$ For $n= (e+2)/2,$ $x=e-n+1=n-1$. Hence, $e+x \in \SM{1,n}$ and for any $j \in [2,e-1] \setminus \{n\},$ $x+(e+j)=e+n+j-1>e+n.$ Furthermore, $x+(e+j)\neq 2e+1$ as $j\neq n$. So, $e-n+1$ is a pseudo-Frobenius number if $n= (e+2)/2.$ Now assume that  $n \neq (e+2)/2.$ If $n<(e+1)/2,$ then for $x=e-n+1$ and for any $j\in [0,e-1] \setminus \{1,n\}$,
$$x+(e+j)=2e+j-n+1>e+n+j> e+n$$
and $x+(e+j) \neq 2e+1$ as $j\neq n$. Thus $x = e-n+1 \in \PF(\SM{1,n}).$

For $n \ge (e+1)/2,$ and $x=e-n+1,$ pick $j=n-x=2n-e-1\geq 0$. Note that $j<e-1$  and $j \notin \{1,n\}$ as $n \neq (e+2)/2$ and as $2n-e-1=n$ implies $n=e+1$ which is a contradiction. Then $x+e+j=e+n\notin \SM{1,n}$ implying that $e-n+1 \notin \PF(\SM{1,n})$ in this case. Thus when $n\neq 2,3$,
$$\PF(\SM{1,n}) = \begin{cases}
 \{e+n,2e+1\}, & n \geq (e+1)/2 \text{ and } n \neq (e+2)/2 \\
 \{e+n,2e+1,e-n+1\}, & n<(e+1)/2 \text{ or } n = (e+2)/2,
\end{cases}$$
giving us the required Cohen-Macaulay type of $\SM{1,n}.$

\textbf{Case 2}: Suppose $m \ge 2.$ When $(m,n) = (2,3)$, from \cite[Theorem 2.3]{DGS3} we know that $\SM{2,3}$ is symmetric and thus $\PF(\SM{2,3}) = \{F(\SM{2,3})\}$ (\cite[Corollary 4.11]{Rosales2009}) implying that $t(\SM{2,3})=1.$ Now assume $(m,n)\neq (2,3)$.

When $2\leq m<n\leq e-1$, since the Frobenius number $F(\SM{m,n})=e+n,$ it follows that the gap set, $G(\SM{m,n}) = [1,e-1] \cup \{e+m,e+n\}$. We claim that in this case, the set of pseudo-Frobenius elements,
$$\PF(\SM{m,n}) = \begin{cases}
G(\SM{m,n})\setminus [1,n], & n \leq 2m \\
\big( G(\SM{m,n})\setminus [1,n] \big) \cup \{n-m\}, & n> 2m.
\end{cases}$$
To see this, if $x\in G(\SM{m,n})\setminus [1,n]$ then for any $j\in [0,e-1]\setminus \{m,n\}$,
$$x+(e+j)\geq e+n+1>F(\SM{m,n}).$$ So these gaps are all pseudo-Frobenius numbers. If $x$ is a gap in $ [1,n]\setminus\{n-m\}$, then there exists $j\in [0,n-1]\setminus \{m\}$ such that $x=n-j.$ Note that for any $j \in [0,n-1] \setminus \{m\},$ $e+j\in \SM{m,n}$ but  
$$x+(e+j) = (n-j)+(e+j)=e+n\notin \SM{m,n}$$
implying that any $x \in [1,n]\setminus\{n-m\}$ is not a pseudo-Frobenius number. Finally, consider $x=n-m$. If $n\leq 2m$, pick $j=m-x=2m-n \geq 0$. Note that $j < e-1$, and $j \not\in \{m,n\}$ as $2m-n \in \{m,n\}$ implies $m=n$, a contradiction. Thus $j \in [0,e-1] \bs \{m,n\}$, and $e+j +x = e+m \not\in \SM{m,n}$, which implies $x \not\in \PF(\SM{m,n})$.

If $n>2m$, then for $x=n-m$ and $j \in [0,e-1] \bs \{m,n\}$, $$x+e+j = e+j + n-m > e+2m-m = e+m,$$
and $e+j+x \neq e+n$ as $j \neq m$. In other words, $e+j + n-m \in \SM{m,n}$ for all $j \in [0,e-1] \bs \{m,n\}$, which implies $n-m \in \PF(\SM{m,n})$. Thus, 
\[ t(\SM{m\geq 2,n}) = \begin{cases}
        1, & (m,n)=(2,3) \\
        e-n+1, & n \leq 2m, n \neq 3 \\
        e-n+2, & n>2m.  
    \end{cases} \qedhere \]
\end{proof}

\subsection{The Defining Ideal $\IM{m,n}$.}
Let $\JM{m,n} = [0,e-1] \bs \{m,n\}$ and $\RM{m,n}=k[X_i \mid i\in \JM{m,n}]$ be a polynomial ring over a field $k$ with weighted degrees, $\deg X_i = e+i,$ for all $i \in \JM{m,n}.$ Then the semigroup ring $k[\SM{m,n}] \cong \RM{m,n}/\IM{m,n}$ has  embedding dimension $e-2,$ and multiplicity $e$, where $\IM{m,n}$ is the defining ideal of $\SM{m,n}$. We end this section with a technical result analogous to \Cref{lem:height of m}. Note that, as in \Cref{lem:height of m}, the conclusion is true for all $2 \leq m \leq e-2$, but we only utilize it to compute the resolutions of $\IM{2,3}$ and $\IM{3,4}$.

\noindent Let  
$$A_{m,m+1} =  \begin{bmatrix}
        X_0   & \cdots &X_{m-2}& X_{m+2} & \cdots & X_{e-2}&  X_{e-1} \\
        X_1  &  \cdots&  X_{m-1}  & X_{m+3} & \cdots  &X_{e-1}   & X_0^2\\
\end{bmatrix}.$$

\begin{lemma}\label{lem:height of m and m+1}
For all $m\ge 2,$ ${\bf I}_2(A_{m,m+1})\in \RM{m,m+1}$ is an ideal of height $e-4.$ 
\end{lemma}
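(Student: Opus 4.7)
The plan is to mirror the proof of Lemma \ref{lem:height of m}: I will straighten $A_{m,m+1}$ into the standard matrix $Y_u$ of \cite{GimSenSri} (with $u = e-3$) by modding out a single linear form. Observe that $A_{m,m+1}$ is a $2 \times (e-3)$ matrix whose columns, except for the $(m-1)$-th, already exhibit the ``consecutive-indices'' pattern $[X_j, X_{j+1}]^T$ (together with the closing column $[X_{e-1}, X_0^2]^T$) characteristic of $Y_{e-3}$. The only obstruction is the jump from $X_{m-1}$ (bottom of column $m-1$) to $X_{m+2}$ (top of column $m$). I would therefore define a modified matrix $A'_{m,m+1}$ by replacing the single entry $X_{m-1}$ by $X_{m+2}$; relabeling the remaining $e-3$ variables $\{X_i : i \in [0,e-1] \setminus \{m-1,m,m+1\}\}$ in increasing order as $y_0, y_1, \ldots, y_{e-4}$ then identifies $A'_{m,m+1}$ with $Y_{e-3}$. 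Consequently, ${\bf I}_2(A'_{m,m+1})$ is a prime ideal of height $(e-3)-1 = e-4$ in the subring of $\RM{m,m+1}$ obtained by omitting $X_{m-1}$, and hence also of height $e-4$ in $\RM{m,m+1}$.

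With $A'_{m,m+1}$ in hand I would then observe, exactly as in Lemma \ref{lem:height of m}, that
$$ {\bf I}_2(A_{m,m+1}) + (X_{m-1} - X_{m+2}) = {\bf I}_2(A'_{m,m+1}) + (X_{m-1} - X_{m+2}),$$
because the two matrices agree modulo the relation $X_{m-1} = X_{m+2}$. Since $X_{m-1}$ does not appear anywhere in $A'_{m,m+1}$, the linear form $X_{m-1} - X_{m+2}$ lies outside the prime ideal ${\bf I}_2(A'_{m,m+1})$, so adjoining it raises the height by one, yielding height $e-3$ for the right-hand side in $\RM{m,m+1}$. Subadditivity of heights then forces $\height{{\bf I}_2(A_{m,m+1})} \geq (e-3)-1 = e-4$. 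The reverse inequality is the classical Macaulay bound $(r-t+1)(n-t+1) = e-4$ for the ideal of $2 \times 2$ minors of a $2 \times (e-3)$ matrix, so combining gives $\height{{\bf I}_2(A_{m,m+1})} = e-4$.

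The main obstacle I anticipate is purely bookkeeping: confirming that the identification of $A'_{m,m+1}$ with $Y_{e-3}$ is valid across all admissible $m$, including the degenerate situations where the initial block $X_0, \ldots, X_{m-2}$ collapses to a single column (e.g.\ $m = 2$) or the middle block $X_{m+2}, \ldots, X_{e-2}$ is short or empty (e.g.\ $m$ close to $e-2$). In each such case one must verify that the closing column $[X_{e-1}, X_0^2]^T$ still aligns with $[y_{e-4}, y_0^2]^T$ of $Y_{e-3}$; this is straightforward but should be carried out uniformly.
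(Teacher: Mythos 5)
Your proposal is correct and follows essentially the same route as the paper: both modify $A_{m,m+1}$ by replacing the lone occurrence of $X_{m-1}$ with $X_{m+2}$ to obtain a matrix identifiable with $Y_{e-3}$ from \cite{GimSenSri}, then recover the height of ${\bf I}_2(A_{m,m+1})$ from the equality ${\bf I}_2(A_{m,m+1})+(X_{m-1}-X_{m+2})={\bf I}_2(A'_{m,m+1})+(X_{m-1}-X_{m+2})$ together with Krull's bound and the generic height bound for $2\times 2$ minors. Your write-up is in fact slightly more explicit than the paper's about why adjoining $X_{m-1}-X_{m+2}$ raises the height by exactly one and why the two inequalities pin down the height.
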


\begin{proof}
Recall the ($2 \times u$) matrix $Y_u$ from Lemma \ref{lem:height of m}, and that ${\bf I}_2(Y_u)$ is a prime ideal of height $u-1.$ Consider 
$$A'_{m,m+1} =  
\begin{bmatrix}
    X_0   & \cdots & X_{m-3} & X_{m-2}& X_{m+2}  & \cdots & X_{e-2}&  X_{e-1} \\
    X_1  &  \cdots & X_{m-2} &  X_{m+2}  & X_{m+3} & \cdots  &X_{e-1}   & X_0^2\\
\end{bmatrix}.$$
Identifying $k[y_0, \ldots, y_{u-1}]$ with $\RM{m-1,m,m+1}$ in the natural order, as $A'_{m,m+1}$ has $u=e-3$ columns, we get ${\bf I}_2(A'_{m,m+1})$ is a (prime) ideal of height $e-4$ in $\RM{m-1,m,m+1}$ and so in $\RM{m,m+1}.$ Further, ${\bf I}_2(A'_{m,m+1})+(X_{m-1}-X_{m+2})$ has height $e-3$. As ${\bf I}_2(A_{m,m+1})+(X_{m-1}-X_{m+2}) = {\bf I}_2(A'_{m,m+1})+(X_{m-1}-X_{m+2}),$ we have height of ${\bf I}_2(A_{m,m+1})$ is $ e-4$.
\end{proof}

\section{Symmetric Sally $\SM{2,3}$}\label{Gorenstein}
In this section, we focus on the Gorenstein case, i.e., we study the Sally semigroup $\SM{2,3}$ by computing its defining ideal, a minimal free resolution, and the Betti sequence.

For ease of notation, set 
\[ X_e = X_0^2, \quad X_{e+1} =X_0X_1, \quad X_{e+2}=X_1^2, \]
and let $A_{2,3}$ and $B_{2,3}$ be the matrices

$$A_{2,3} = \begin{bmatrix}
        X_0   & X_4 & X_5 & \cdots &  X_{e-2} & X_{e-1} \\
        X_1   & X_5 & X_6  & \cdots    & X_{e-1} & X_e
        \end{bmatrix}, \q B_{2,3} = \begin{bmatrix}
          X_1   & X_4 & X_{5} & \cdots & X_{e-2}&X_{e-1}\\
   X_4  & X_{7} & X_{8}  & \cdots & X_{e+1} & X_{e+2}\\ 
    \end{bmatrix}.$$

\begin{theorem}
The defining ideal $\IM{2,3}$ of $\SM{2,3}$ is  $$\IM{2,3}= {\bf I}_2(A_{2,3})+{\bf I}_2(B_{2,3}).$$
In particular, the minimal generators of $\IM{2,3}$ are the ${e-3\choose 2}$ size $2$ minors of $A_{2,3}$ and the $e-4$ size $2$ minors of $B_{2,3}$ that involve the first column. 
\end{theorem}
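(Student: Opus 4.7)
The plan is to follow the template of Theorem \ref{thm:defining ideal m} almost verbatim. First I would verify the inclusion ${\bf I}_2(A_{2,3})+{\bf I}_2(B_{2,3})\subseteq \IM{2,3}$. Within each column of $A_{2,3}$ the subscript difference between the two entries is $1$, and within each column of $B_{2,3}$ it is $3$, once one uses the conventions $X_e=X_0^2$, $X_{e+1}=X_0X_1$ and $X_{e+2}=X_1^2$. Consequently, any $2\times 2$ minor of either matrix is a binomial whose two monomials map to the same power of $t$ under the map $X_i\mapsto t^{e+i}$, so it lies in the kernel $\IM{2,3}$.

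Next I would establish that the listed $\binom{e-3}{2}+(e-4)$ binomials are minimal generators of the ideal they span. The $2\times 2$ minors of $A_{2,3}$ are minimal generators of ${\bf I}_2(A_{2,3})$: indeed $A_{2,3}$ agrees with a generic $2\times(e-3)$ matrix except at the single entry $X_e=X_0^2$ in the last column, and this does not create any $k$-linear relation among the minors. For the $e-4$ minors of $B_{2,3}$ involving the first column $[X_1,X_4]^T$, the argument parallels that of Theorem \ref{thm:defining ideal m}: each such minor $X_1X_{c+3}-X_4X_c$ carries a distinguished monomial, for instance $X_4^2$ when $c=4$, $X_0X_1^2$ when $c=e-2$, and $X_1^3$ when $c=e-1$, that appears in no $A_{2,3}$-minor and in no $B_{2,3}$-minor omitting the first column, and a cascade on $c$ rules out combinations passing through intermediate values. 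Since \cite{DGS3} computes the first Betti number $\mu(\IM{2,3})=\binom{e-3}{2}+(e-4)$ as the specialization of its formula for $\mu(\IM{m,n})$ at $(m,n)=(2,3)$ via Hochster's formula, the listed generators, being contained in $\IM{2,3}$ and linearly independent modulo the irrelevant maximal ideal, form a minimal generating set, and the asserted equality $\IM{2,3}={\bf I}_2(A_{2,3})+{\bf I}_2(B_{2,3})$ follows.

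The main obstacle is the monomial book-keeping in the minimality argument. One has to exclude not merely direct reproductions of a first-column $B_{2,3}$-minor from a single other generator, but also cascading combinations in which several $A_{2,3}$-minors and intermediate $B_{2,3}$-minors interact; for this it is convenient to work in the natural $\Z$-grading by $S$-degree, in which a linear dependence can occur only among generators of a common degree, reducing the task to a finite monomial check in each graded component. This is precisely the analogue, for the pair $[X_1,X_4]^T$ inside $B_{2,3}$, of the argument carried out for $[X_{m-1},X_{m+1}]^T$ in the proof of Theorem \ref{thm:defining ideal m} in the case $m=2$.
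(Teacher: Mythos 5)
Your argument is correct in outline, but it takes a genuinely different route from the paper: the paper proves this theorem in one line by citing \cite[Theorem 5.2]{DGS3}, where a minimal generating set for $\IM{2,3}$ is computed directly, whereas you re-derive the result following the template of Theorem \ref{thm:defining ideal m} (containment of the minors in the kernel via the constant subscript shift in each column, a distinguished-monomial argument for minimality, and then Nakayama combined with the count $\mu(\IM{2,3})=\binom{e-3}{2}+(e-4)=\frac{(e-1)(e-4)}{2}$ from the first Betti number computation in \cite{DGS3}). What your route buys is a self-contained verification that does not presuppose the explicit generating set of \cite{DGS3}, only the numerical value of $\mu(\IM{2,3})$; what it costs is the monomial bookkeeping, and there is one point where your sketch needs more care than you indicate: for an intermediate first-column minor $X_1X_{c+3}-X_4X_c$ of $B_{2,3}$ with $5\le c\le e-4$, \emph{both} monomials do occur in minors of $A_{2,3}$ (namely $X_0X_{c+4}-X_1X_{c+3}$ and $X_4X_c-X_5X_{c-1}$), so there is no single distinguished monomial; one must instead observe that in the relevant weighted degree $2e+c+4$ the $A_{2,3}$-minors form a telescoping chain $X_0X_{c+4}-X_1X_{c+3},\ X_4X_c-X_5X_{c-1},\ X_5X_{c-1}-X_6X_{c-2},\dots$, and that the monomial $X_0X_{c+4}$, which is absent from the target binomial and has standard degree $2$ (hence cannot be absorbed into $\mathfrak{m}\,\IM{2,3}$), forces the coefficient of the first link to vanish, after which the remaining chain never produces $X_1X_{c+3}$. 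This is exactly the ``cascade'' you flag, so the gap is one of execution rather than of strategy; with that step written out your proof is complete.
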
  
\begin{proof}
 This is a direct consequence of \cite[Theorem 5.2]{DGS3} in which the authors compute a minimal generating set for $\IM{2,3}$.
\end{proof}
   
   \begin{theorem} \label{thm:res of I23}
Let $\displaystyle{F_{2,3} = \sum_{i=0,i \neq 1,2,3}^{e-1} \RM{2,3}f_i}$, $G = \RM{2,3}g_1\oplus \RM{2,3}g_2$ be free $\RM{2,3}$-modules, and $\phi: F_{2,3} \to G$ be given by the matrix $A_{2,3}$ so that $\phi(f_i) = X_i g_1+X_{i+1}g_2,$ for $i \in \{0\} \cup [4, e-1].$ Let $\mathbf{E}$ denote the Eagon-Northcott complex of $\phi$ and $\mathbf{E^*}$ denote its dual. Construct a map $\psi : D_{e-5}G \to \RM{2,3}$ as follows:
\begin{equation*} \psi( g_1^jg_2^{e-5-j}) = (-1)^j (X_1X_{j+7}-X_4X_{j+4}), \q 0\le j\le e-5.
\end{equation*}
Then the map $\psi$ extends to a map of complexes  $\psi: \mathbf{E^*}\to \mathbf{E}$ whose mapping cone is the resolution of $\IM{2,3}$.

\end{theorem}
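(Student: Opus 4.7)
The plan is to follow the template of Theorem~\ref{thm:res of I1}, constructing the minimal free resolution of $\IM{2,3}$ as the mapping cone of an explicit map of complexes $\psi:\mathbf{E}^\ast\to\mathbf{E}$. The starting point is that both $\mathbf{E}$ and $\mathbf{E}^\ast$ are exact. By Lemma~\ref{lem:height of m and m+1} applied with $m=2$, the ideal ${\bf I}_2(A_{2,3})$ has height $e-4$, which is the maximal possible for the $2\times 2$ minors of a $2\times(e-3)$ matrix; hence the Eagon--Northcott complex $\mathbf{E}$ is a minimal free resolution of $\RM{2,3}/{\bf I}_2(A_{2,3})$, and the standard self-duality of perfect Eagon--Northcott complexes yields acyclicity of $\mathbf{E}^\ast$ (after identifying $\wedge^i F_{2,3}^\ast \cong \wedge^{e-3-i} F_{2,3}$ via the canonical pairing with the appropriate signs).

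Next, I would verify that $\psi$ extends to a map of complexes. By the standard comparison theorem applied to the acyclic complex $\mathbf{E}$, it suffices to check that $\operatorname{Im}(\psi\circ\delta^\ast_{e-4})\subseteq{\bf I}_2(A_{2,3})=\operatorname{Im}(\delta_1)$; the higher lifts $\psi_k$ then exist uniquely up to homotopy. Concretely, for a basis element $f_i\otimes g_1^{j}g_2^{e-6-j}$ of $E^\ast_{e-5}$ with $i\in\{0\}\cup[4,e-1]$ and $0\le j\le e-6$, one computes
\begin{align*}
(\psi\circ\delta^\ast_{e-4})(f_i\otimes g_1^{j}g_2^{e-6-j})
&=\psi\bigl(X_ig_1^{j+1}g_2^{e-6-j}+X_{i+1}g_1^{j}g_2^{e-5-j}\bigr)\\
&=(-1)^{j+1}X_i\bigl(X_1X_{j+8}-X_4X_{j+5}\bigr)+(-1)^{j}X_{i+1}\bigl(X_1X_{j+7}-X_4X_{j+4}\bigr)\\
&=(-1)^{j}\Bigl[X_1\bigl(X_{i+1}X_{j+7}-X_iX_{j+8}\bigr)+X_4\bigl(X_iX_{j+5}-X_{i+1}X_{j+4}\bigr)\Bigr].
\end{align*}
Both bracketed binomials are, up to sign, $2\times 2$ minors of $A_{2,3}$: the indices $j+4,j+5\in[4,e-1]$ always correspond to interior columns, while the boundary cases $j+7\in\{e,e+1\}$ are handled, just as in Theorem~\ref{thm:res of I1}, via the substitutions $X_e=X_0^2$, $X_{e+1}=X_0X_1$, $X_{e+2}=X_1^2$, which rewrite $X_{i+1}X_{j+7}-X_iX_{j+8}$ as $X_0$ or $X_1$ times the leading minor $X_0X_{i+1}-X_1X_i$ of $A_{2,3}$.

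Finally, since both $\mathbf{E}$ and $\mathbf{E}^\ast$ are acyclic in positive degrees, the mapping cone of the extended map $\psi:\mathbf{E}^\ast\to\mathbf{E}$ is acyclic in positive degrees, and its $H_0$ equals $\RM{2,3}/({\bf I}_2(A_{2,3})+\operatorname{Im}\psi)$. The generators $\{X_1X_{j+7}-X_4X_{j+4}:0\le j\le e-5\}$ of $\operatorname{Im}\psi$ are precisely the $e-4$ size-$2$ minors of $B_{2,3}$ containing the first column, so the sum equals $\IM{2,3}$ by the preceding description of the defining ideal. The main technical obstacle is the key composition identity above, with its sign and boundary bookkeeping; minimality of the resolution (should it be needed as a corollary) would then follow as in Corollary~\ref{Betti1} by inspecting the linear degrees of the entries of the lifts $\psi_k$, and the symmetry of $\SM{2,3}$ furnishes an independent consistency check, since the resulting Betti sequence must be palindromic.
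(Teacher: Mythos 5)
Your proposal is correct and follows essentially the same route as the paper: invoke Lemma~\ref{lem:height of m and m+1} to get acyclicity of $\mathbf{E}$ and $\mathbf{E}^\ast$, verify the identical composition identity $(\psi\circ\delta^\ast_{e-4})(f_i\otimes g_1^{j}g_2^{e-6-j})=(-1)^{j}\bigl[X_1(X_{i+1}X_{j+7}-X_iX_{j+8})+X_4(X_iX_{j+5}-X_{i+1}X_{j+4})\bigr]\in{\bf I}_2(A_{2,3})$, and conclude via the mapping cone. Your extra care with the boundary substitutions $X_e=X_0^2$, $X_{e+1}=X_0X_1$, $X_{e+2}=X_1^2$ and with identifying $H_0$ of the cone with $\RM{2,3}/\IM{2,3}$ only makes explicit what the paper leaves implicit.
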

\begin{proof} Consider the following diagram
\[\begin{tikzcd}
	{\mathbf E: 0} & {\overset{e-3}{\wedge} F_{2,3}\otimes D_{e-5}G^\ast} & \cdots & {\overset{2}{\wedge}F_{2,3}} & {\RM{2,3}} \\
	{\mathbf E^\ast: 0} & {\RM{2,3}^\ast} & \cdots & {\overset{e-4}{\wedge} F_{2,3}^\ast \otimes D_{e-6}G} & {\overset{e-3}{\wedge} F_{2,3}^\ast \otimes D_{e-5}G}
	\arrow[from=1-1, to=1-2]
	\arrow["{\delta_{e-4}}", from=1-2, to=1-3]
	\arrow["{\delta_2}", from=1-3, to=1-4]
	\arrow["{\delta_1}", from=1-4, to=1-5]
%	\arrow[from=1-5, to=1-6]
	\arrow[from=2-1, to=2-2]
	\arrow["{\delta_1^\ast}", from=2-2, to=2-3]
	\arrow[from=2-3, to=2-4]
    \arrow[dashed, from=2-2, to=1-2]
	\arrow["{\psi_1}"', dashed, from=2-4, to=1-4]
	\arrow["{\delta^\ast_{e-4}}", from=2-4, to=2-5]
	\arrow["\psi"', from=2-5, to=1-5]
%	\arrow[from=2-5, to=2-6]
\end{tikzcd}\]
 where in $\mathbf{E^*}$ we identify $\wedge^iF_{2,3}^* \cong \wedge^{e-3-i}F_{2,3}$ by the canonical isomorphism with appropriate signs, and $$\delta_{e-4}^*(f_i\otimes g_1^jg_2^{e-6-j} )= X_ig_1^{j+1}g_2^{e-6-j}+X_{i+1}g_1^jg_2^{e-5-j}.$$ 
 Note that $A_{2,3}$ is a $2 \times e-3$ matrix and by \Cref{lem:height of m and m+1} has generic height $e-4.$ Thus, both $\mathbf E$ and $\mathbf E^\ast$ are exact. To show that $\psi$ extends to a map of complexes, it suffices to check that the image of $\psi \circ \delta_{e-4}^\ast$ is in ${\bf I}_2(A_{2,3})$. Indeed, 
 \begin{eqnarray*}
 \psi\circ \delta^\ast_{e-4} ( f_i\otimes g_1^jg_2^{e-6-j})&=& \psi (X_ig_1^{j+1}g_2^{e-6-j}+X_{i+1}g_1^jg_2^{e-5-j})\\
 &=&(-1)^j[X_1(X_{i+1}X_{j+7}-X_iX_{j+8})+X_4(X_{i}X_{j+5}-X_{i+1}X_{j+4})] \in {\bf I}_2(A_{2,3}). 
\end{eqnarray*}

Thus, $\psi$ extends to a map of complexes ${\bf E^*} \to {\bf E}$ and one can verify that
\begin{equation}\label{eq:check psi1}
    \psi_1(f_i\otimes g_1^jg_2^{e-6-j})= X_1 C + X_4 D, \q \text{ for some } C, D \in \wedge^2 F_{2,3}.
\end{equation}
Finally, the mapping cone $\psi$ is exact and is the resolution of  $\IM{2,3}$. 
\end{proof}

\begin{corollary}\label{Betti3}
    The resolution in \Cref{thm:res of I23} is minimal.  Hence, for $1\le t\le e-4,$ the Betti numbers of $k[\SM{2,3}]$ are 
    $$\beta_t= \frac{t(e-1)}{e-t-2}{e-3\choose t+1},$$
    and $\beta _0 =\beta_{e-3}= 1.$
\end{corollary}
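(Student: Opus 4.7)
My plan is to argue minimality first and then read off the Betti numbers from the mapping-cone structure, exactly as was done in Corollaries \ref{Betti1} and \ref{Betti2}.

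\emph{Step 1: Minimality.} The resolution is the mapping cone of $\psi: \mathbf{E}^\ast \to \mathbf{E}$, so its differentials are assembled from the entries of $\delta_i$ (in $\mathbf{E}$), $\delta_i^\ast$ (in $\mathbf{E}^\ast$), and the comparison maps $\psi_i$. I would first note that all entries of $\delta_i$ for $i \geq 1$ come from $A_{2,3}$ (or the vertical stacking involved in the Eagon--Northcott differential), hence are of the form $X_j$ with $j \in \{0\} \cup [4,e-1]$ — all of positive degree. The same applies to $\delta_i^\ast$ by duality. The map $\psi$ itself has entries $\pm(X_1 X_{j+7}-X_4 X_{j+4})$, which are binomials lying in the maximal ideal. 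Finally, equation \eqref{eq:check psi1} shows that $\psi_1$ has entries of the form $X_1$ or $X_4$ times a wedge basis element, so of degree one in the standard grading. I would then argue by induction, mirroring the argument in Corollary \ref{Betti2}: once $\psi_{i-1} \circ \delta_{e-3-i}^\ast$ has entries of degree two in $X_1, X_4$ only, the comparison map $\psi_i$ must have entries of degree one. The main (but routine) obstacle here is checking that no cancellation produces a degree-zero (unit) entry; this follows because the only degree-two terms arising are monomials in $\{X_1,X_4\}$, which cannot appear in any binomial of $\mathbf{I}_2(A_{2,3})$ — so the lifts $\psi_i$ genuinely use degree-one variables.

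\emph{Step 2: Counting ranks.} Because $F_{2,3}$ has rank $e-3$ and $G$ has rank $2$, the Eagon--Northcott complex $\mathbf{E}$ has modules $E_0=\RM{2,3}$ and $E_t = \wedge^{t+1}F_{2,3}\otimes D_{t-1}G^\ast$ of rank $t\binom{e-3}{t+1}$ for $1 \leq t \leq e-4$. In the mapping cone, the $t$-th free module is $E_t \oplus E_{e-3-t}^\ast$ for $1 \leq t \leq e-4$, with a copy of $\RM{2,3}$ at positions $0$ and $e-3$. This immediately gives $\beta_0=\beta_{e-3}=1$ and, for $1\leq t\leq e-4$,
\[
\beta_t \;=\; t\binom{e-3}{t+1} + (e-3-t)\binom{e-3}{t-1}.
\]

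\emph{Step 3: Closed form.} A short binomial identity will finish the proof: rewriting $(e-3-t)\binom{e-3}{t-1}$ using $(e-2-t)! = (e-2-t)(e-3-t)!$ gives
\[
(e-3-t)\binom{e-3}{t-1} \;=\; \frac{(e-3)!}{(t-1)!\,(e-4-t)!\,(e-2-t)} \;=\; \frac{t(t+1)}{e-t-2}\binom{e-3}{t+1},
\]
and so
\[
\beta_t \;=\; \binom{e-3}{t+1}\left(t + \frac{t(t+1)}{e-t-2}\right) \;=\; \frac{t(e-1)}{e-t-2}\binom{e-3}{t+1},
\]
matching the asserted formula. Since the semigroup is symmetric (Gorenstein), a sanity check $\beta_t=\beta_{e-3-t}$ can also be verified from the closed form, providing additional confidence in the computation.
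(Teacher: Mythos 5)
Your proposal is correct and follows essentially the same route as the paper: minimality via the observation that the entries of $\psi_1$ are $X_1$ or $X_4$ and that all differentials and lifted comparison maps have entries of positive degree, followed by reading off $\beta_t$ as the rank of $E_t\oplus E^\ast_{e-3-t}$ in the mapping cone and simplifying the binomial expression. The only cosmetic difference is that you work out the identity $(e-3-t)\binom{e-3}{t-1}=\frac{t(t+1)}{e-t-2}\binom{e-3}{t+1}$ explicitly, which the paper leaves implicit; note also that a few entries of the Eagon--Northcott differentials involve $X_e=X_0^2$ rather than a single variable, but this does not affect the minimality argument.
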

    
\begin{proof}
    Let $\psi: \mathbf E^\ast \to \mathbf E$ be the map of complexes as in the Theorem \ref{thm:res of I23}.  If we choose the same bases for $F_{2,3}$ and $G$, the non-zero entries in the matrix of $\psi_1$ are of degree  $1$ in the standard grading as can be seen from \Cref{eq:check psi1}. Indeed, each entry is $X_1$ or $X_4$.  
    Now, the nonzero entries in the matrices of the resolutions $\mathbf E$ and $\mathbf E^*$ are again of degree $1$ (excluding $\delta_1$ and $\delta^*_{e-4}$).  It follows that the non-zero entries in the matrices of $\psi_i, i\ge 1$ must all be of degree at least $1$, and hence, the resolution is minimal. 

    As $E_0 = \RM{2,3}$ and $E_0^* = \RM{2,3}^*,$ we get $\beta_0= \beta_{e-3} = 1$.  For $1\le t\le e-4$, the $t$-th free module in the resolution is $E_t\oplus E^*_{e-3-t}$. Hence the Betti numbers are 
    \[\beta_t = t{e-3\choose t+1}+(e-3-t){e-3\choose e-2-t} = \frac{t(e-1)}{e-t-2}{e-3\choose t+1}. \qedhere\] 
\end{proof}

\section{Case $\SM{3,4}$}  \label{sec:3,4}

As noted in \cite[Corollary 4.6]{DGS3}, $(m,n) = (3,4)$ is also a special case. In \cite{DGS3}, the authors proved that the minimal number of generators of the defining ideal, $\mu(\IM{3,4}) = {e-2 \choose 2}$ and also gave a minimal generating set. In this section, we identify a minimal generating set of $\IM{3,4}$ with the maximal minors of a $2 \times (e-2)$ matrix, compute a minimal free resolution and the Betti sequence of $k[\SM{3,4}].$

Set $X_e=X_0^2, X_{e+1}= X_0X_1, X_{e+2} = X_0X_2, X_{e+3} = X_1X_2,$ and $X_{e+4}= X_2^2.$
Let $A_{3,4}$ and $B_{3,4}$ be the matrices

$$A_{3,4} =  \begin{bmatrix}
    X_0   &X_1 & X_5 & \cdots &  X_{e-2} & X_{e-1} \\
    X_1   & X_2&  X_6  & \cdots    & X_{e-1} & X_e\\
\end{bmatrix}, \q 
B_{3,4} =  \begin{bmatrix}
    X_2   & X_{5} & \cdots & X_e& X_{e+1}\\
    X_5 & X_8  & \cdots & X_{e+3} & X_{e+4} 
\end{bmatrix}. $$

Let $A' =  \begin{bmatrix}
    X_e  & X_{e+1} & X_5 & \cdots &  X_{e-2} & X_{e-1} \\
    X_{e+1}  & X_{e+2} &  X_6  & \cdots & X_{e-1} & X_e\\
\end{bmatrix}.$ Then,
\[ I_2(A') =I_2 \left(  \begin{bmatrix}
    X_5 & \cdots &  X_{e-2} & X_{e-1} & X_0^2  & X_0X_1 \\
     X_6  & \cdots    & X_{e-1} & X_0^2 & X_0X_1  & X_0X_2&\\\end{bmatrix} \right) \subset I_2(A_{3,4}) \]
 and has height $e-4$. 
\begin{theorem}
The defining ideal $\IM{3,4}$ of $\SM{3,4}$ is  $$\IM{3,4}= {\bf I}_2(A_{3,4})+{\bf I}_2(B_{3,4}).$$
In particular, the minimal generators of $\IM{3,4}$ are the ${e-3\choose 2}$ size $2$ minors of $A_{3,4}$ and the $e-3$ size $2$ minors of $B_{3,4}$ that involve the first column. 
\end{theorem}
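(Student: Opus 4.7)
The strategy is to prove $\IM{3,4} = {\bf I}_2(A_{3,4}) + {\bf I}_2(B_{3,4})$ by first establishing the containment ``$\supseteq$'' directly from toric arithmetic, and then invoking the count $\mu(\IM{3,4}) = {e-2\choose 2}$ from \cite[Corollary 4.6]{DGS3} to force both equality and the minimality of the listed generating set.

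For the containment, each listed element is a binomial whose two monomials carry the same $\SM{3,4}$-degree. In $A_{3,4}$ the two rows differ uniformly by $1$ in subscript (with the identification $X_e = X_0^2$), so every $2\times 2$ minor of the form $X_iX_{j+1} - X_jX_{i+1}$ is sent to zero by the map $\phi: X_i \mapsto t^{e+i}$. The parallel argument handles $B_{3,4}$, whose rows differ uniformly by $3$; the identifications $X_e = X_0^2$, $X_{e+1}=X_0X_1$, $X_{e+3}=X_1X_2$, $X_{e+4}=X_2^2$ ensure that every size-$2$ minor involving the first column (including the cubic binomials $X_1X_2^2 - X_0^2X_5$ and $X_2^3-X_0X_1X_5$ arising from the last two columns) lies in $\IM{3,4}$.

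For the cardinality, the matrix $A_{3,4}$ has $e-3$ columns (indexed by the $\SM{3,4}$-generators $0,1,5,6,\ldots,e-1$), contributing ${e-3\choose 2}$ size-$2$ minors, while $B_{3,4}$ has $e-2$ columns and yields $e-3$ first-column minors. Since ${e-3\choose 2} + (e-3) = {e-2\choose 2}$, the total matches $\mu(\IM{3,4})$ exactly.

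To close the argument, I would verify that the ${e-2\choose 2}$ binomials produced above are linearly independent in $\IM{3,4}/\mathfrak{m}\IM{3,4}$, so that they constitute a minimal generating set and hence generate $\IM{3,4}$. The cleanest route is to match them bijectively with the explicit minimal generating set already produced in \cite{DGS3}, identifying each minor with a unique $\SM{3,4}$-degree in which the first Betti number is nonzero. The main technical obstacle is the handling of the two minors of $B_{3,4}$ in the wrapped columns: these produce cubic binomials and one must confirm that they are not $\mathfrak{m}$-multiples of quadratic minors coming from $A_{3,4}$, so that they remain genuine minimal generators rather than syzygy consequences of the $A_{3,4}$-part.
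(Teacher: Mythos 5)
Your proposal is correct and ultimately rests on the same foundation as the paper's own (one-line) proof, namely the explicit minimal generating set for $\IM{3,4}$ computed in \cite[Subsection 5.6]{DGS3}; your added scaffolding (the toric-degree check for containment, the count ${e-3\choose 2}+(e-3)={e-2\choose 2}=\mu(\IM{3,4})$, and the Nakayama step) is sound and makes the identification more self-contained. The final matching with the generators of \cite{DGS3}, which you leave as the step ``to verify,'' is precisely what the paper's citation supplies.
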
  
\begin{proof}
   This follows from \cite[Subsection 5.6]{DGS3} in which a minimal generating set for $\IM{3,4}$ is given.
\end{proof}
   
\begin{theorem} \label{thm:res of I34}
Let $\displaystyle{F_{3,4} = \sum_{i=0,i \neq 2,3,4}^{e-1} \RM{3,4}f_i}$, $G = \RM{3,4}g_1\oplus \RM{3,4}g_2$ be free $\RM{3,4}$-modules, and $\phi: F_{3,4} \to G$ be given by the matrix $A_{3,4}$. Let $\mathbf{E}$ denote the Eagon-Northcott complex of $\phi$ and $\mathbf{E'}$ denote the Buchsbaum-Rim complex associated to the matrix $A'$  whose $i^{th}$ module is $E_i'=\wedge^{i+1}F_{3,4} \otimes D_iG^\ast$, for $1 \leq i \leq e-4,$ so that $E_0'=F_{3,4}$.
Define $\psi : F_{3,4} \to \RM{3,4}$ as follows
$$\psi( f_j) = \begin{cases}
    X_2X_{j+3}-X_5X_{j}, & 5\le j\le e-1\\
    X_2X_{j+e+3}-X_5X_{j+e}, & j=0,1.
\end{cases}$$
This map $\psi$ extends to a map of complexes  $\psi: \mathbf E'  \to \mathbf E$ whose mapping cone is the resolution of $\IM{3,4}$. 
\end{theorem}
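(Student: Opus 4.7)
The plan is to follow the mapping-cone strategy used for $\IM{2}$ in Theorem \ref{thm:res of Im}. First I would establish the exactness of both complexes. Since $A_{3,4}$ is a $2\times(e-3)$ matrix, Lemma \ref{lem:height of m and m+1} (with $m=3$) gives that $\mathbf{I}_2(A_{3,4})$ has height $e-4$, the maximal possible value, so the Eagon--Northcott complex $\mathbf{E}$ is acyclic. As remarked just before the statement, $\mathbf{I}_2(A') \subseteq \mathbf{I}_2(A_{3,4})$ has the same height $e-4$, which equals the expected codimension $(e-3)-2+1$ of the $2\times 2$ minors of a $2\times(e-3)$ matrix, so the Buchsbaum--Rim complex $\mathbf{E}'$ is also acyclic.

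The next step is to verify that $\psi$ extends to a map of complexes by checking that the image of $\psi \circ \delta_1'$ lies in $\mathbf{I}_2(A_{3,4}) = \mathrm{Im}(\delta_1)$. The Buchsbaum--Rim differential sends $f_i \wedge f_j \otimes g_l$ to $a_{l,i} f_j - a_{l,j} f_i$, where $a_{l,i}$ denotes the $(l,i)$-entry of $A'$. Applying $\psi$ and using the substitutions $X_e=X_0^2$, $X_{e+1}=X_0X_1$, $X_{e+2}=X_0X_2$, $X_{e+3}=X_1X_2$, $X_{e+4}=X_2^2$ gives expressions that split into three natural cases: (a) $\{i,j\}=\{0,1\}$, (b) $i\in\{0,1\}$ with $j\in[5,e-1]$, and (c) $5\le i<j\le e-1$. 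In each case the result factors as an $\RM{3,4}$-linear combination of $2\times 2$ minors of $A_{3,4}$; for instance, case (a) with $l=1$ yields $X_0 X_2^2 (X_0 X_2 - X_1^2)$, while cases (b) and (c) reduce to expressions such as $X_2(X_0^2 X_{j+3} - X_1 X_2 X_j)$ and $X_2(X_i X_{j+3} - X_{i+3} X_j)$, which telescope into sums of standard minors of $A_{3,4}$ via intermediate minors obtained from pairs of columns of the form $(X_p,X_{p+1})^T$.

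Choosing preimages under $\delta_1$ then defines the map $\psi_1: E_1' \to E_1 = \wedge^2 F_{3,4}$. By the acyclicity of $\mathbf{E}$, the map $\psi$ lifts inductively to a full map of complexes $\{\psi_i\}: \mathbf{E}' \to \mathbf{E}$. Forming the mapping cone then yields a bounded free complex over $\RM{3,4}$; since both source and target are acyclic, standard mapping-cone arguments show its homology is concentrated in degree zero, where it computes $\RM{3,4}/(\mathbf{I}_2(A_{3,4}) + \mathrm{Im}(\psi))$. By construction, $\mathrm{Im}(\psi)$ is generated by the $e-3$ elements $\psi(f_j)$, which are precisely the $2\times 2$ minors of $B_{3,4}$ involving its first column, so $\mathbf{I}_2(A_{3,4}) + \mathrm{Im}(\psi) = \mathbf{I}_2(A_{3,4})+\mathbf{I}_2(B_{3,4})=\IM{3,4}$ by the preceding theorem.

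I expect the main obstacle to be the bookkeeping in the case analysis for $\psi \circ \delta_1'$, especially the boundary sub-cases of (b) and (c) with $j\in\{e-2,e-1\}$, where the last column of $A_{3,4}$ forces the telescoping argument to pass through minors involving $X_e=X_0^2$ rather than purely through minors of linear-in-$X_i$ columns; the remainder of the proof proceeds formally from the mapping-cone machinery exactly as in Theorem \ref{thm:res of Im}.
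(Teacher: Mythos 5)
Your proposal follows the paper's proof essentially verbatim: the same appeal to Lemma \ref{lem:height of m and m+1} (together with the remark that ${\bf I}_2(A')\subseteq {\bf I}_2(A_{3,4})$ has height $e-4$) for acyclicity of $\mathbf E$ and $\mathbf E'$, the same three-case verification that $\psi\circ\delta_1'$ lands in ${\bf I}_2(A_{3,4})$ (your representative computations, e.g.\ $X_0X_2^2(X_0X_2-X_1^2)$ in case (a) and $X_2(X_0^2X_{j+3}-X_1X_2X_j)$ in case (b), agree with the paper's), and the same mapping-cone conclusion, with the boundary bookkeeping for $j\in\{e-2,e-1\}$ correctly flagged as the only delicate point. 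If anything, your final step identifying $H_0$ of the cone with $\RM{3,4}/({\bf I}_2(A_{3,4})+\im(\psi))=\RM{3,4}/\IM{3,4}$ via the minors of $B_{3,4}$ through its first column is spelled out more explicitly than in the paper, which simply asserts exactness of the cone.
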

\begin{proof} In this case, we have the following diagram:
\[\begin{tikzcd}
	{\mathbf E: 0} & {\overset{e-3}{\wedge} F_{3,4}\otimes D_{e-5}G^\ast} & \cdots & {\overset{2}{\wedge}F_{3,4}} & {\RM{3,4}} \\
	\E': 0 & {\overset{e-3}{\wedge} F_{3,4}\otimes D_{e-4}G^\ast} & \cdots & {\overset{2}{\wedge}F_{3,4}} \otimes DG^\ast & F_{3,4} 
	\arrow[from=1-1, to=1-2]
	\arrow["{\delta_{e-4}}", from=1-2, to=1-3]
	\arrow["{\delta_2}", from=1-3, to=1-4]
	\arrow["{\delta_1}", from=1-4, to=1-5]
%	\arrow[from=1-5, to=1-6]
	\arrow[from=2-1, to=2-2]
	\arrow["{\delta_{e-4}'}", from=2-2, to=2-3]
	\arrow["{\delta_2'}", from=2-3, to=2-4]
	\arrow["{\psi_1}"', dashed, from=2-4, to=1-4]
    \arrow["{\psi_{e-4}}", dashed, from=2-2, to=1-2]
	\arrow["{\delta_1'}", from=2-4, to=2-5]
	\arrow["\psi"', from=2-5, to=1-5]
%	\arrow[from=2-5, to=2-6]
\end{tikzcd}\]
By \Cref{lem:height of m and m+1} again, $A_{3,4}$ has generic height $e-4$ so both $\mathbf E$ and $\mathbf E'$ are exact. To show $\psi$ extends, it suffices to check that the image of $\psi \circ \delta_{1}'$ is in ${\bf I}_2(A_{3,4})$. Indeed, for $5\le i< j\le e-1$, 
\begin{eqnarray*}
\psi\circ \delta'_1 ( f_i\wedge f_j \otimes g_1)&=& \psi (X_if_j-X_jf_i)\\
 &=& X_i(X_2X_{j+3}-X_5X_{j})-X_j (X_2X_{i+3}-X_5X_{i})\\
 &=& X_2(X_i X_{j+3}-X_jX_{i+3})\in {\bf I}_2(A_{3,4}),
\end{eqnarray*} 
and  
\begin{eqnarray*}
  \psi\circ \delta'_1 ( f_i\wedge f_j \otimes g_2)&=& \psi (X_{i+1}f_j-X_{j+1}f_i)\\
 &=& X_{i+1}(X_2X_{j+3}-X_5X_{j})-X_{j+1} (X_2X_{i+3}-X_5X_{i})\\
 &=& X_2(X_{i+1} X_{j+3}-X_{j+1}X_{i+3})+X_5(X_iX_{j+1}-X_{i+1}X_j)\in {\bf I}_2(A_{3,4}),   
\end{eqnarray*}
with 
\begin{equation}\label{eq:psi1 34 ij}
 \psi_1 ( f_i\wedge f_j \otimes g_1) = X_2 \left( \sum_{u=0}^{2} f_{i+u} \wedge f_{j+2-u} \right),  \,    
 \psi_1 ( f_i\wedge f_j \otimes g_2) = X_2 \left( \sum_{u=1}^{2} f_{i+u} \wedge f_{j+3-u} \right)+X_5(f_i\wedge f_j).
 \end{equation} 
When $i =0 \in \{0,1\},$ and $j\ge 5$, 
\begin{eqnarray*}
\psi\circ\delta'_1 ( f_i\wedge f_j \otimes g_1)&=& \psi (X_iX_0f_j-X_jf_i)\\
 &=& X_0X_i(X_2X_{j+3}-X_5X_{j})-X_j (X_2X_{e+i+3}-X_5X_{e+i}) \\
 &=& X_2(X_0X_i X_{j+3}-X_jX_{i+e+3}) \in {\bf I}_2(A_{3,4}),
 \end{eqnarray*}
 and 
 \begin{eqnarray*}
 \psi\circ\delta'_1 ( f_i\wedge f_j \otimes g_2)&=& \psi(X_{i+1}X_0f_j-X_{j+1}f_i)\\
 &=& X_{i+1}X_0(X_2X_{j+3}-X_5X_{j})-X_{j+1} (X_2X_{i+e+3}-X_5X_{e+i})\\
 &=& X_2(X_0X_{i+1} X_{j+3}-X_{j+1}X_{i+e+3})+X_5(X_{e+i}X_{j+1}-X_0X_{i+1}X_j)\in {\bf I}_2(A_{3,4}),
 \end{eqnarray*}
 with
\begin{align} 
\psi_1(f_i \wedge f_j \otimes g_1) &= X_0X_2(f_i \wedge f_{j+2}) + X_{i+1}X_2 \left( \sum_{u=0}^{1} f_{u} \wedge f_{j+1-u} \right), 
\label{eq:psi1 g1 34 0j} \\ 
\psi_1(f_i \wedge f_j \otimes g_2) &= X_{i+1}X_2 \left( \sum_{u=0}^{1} f_{u} \wedge f_{j+2-u} \right) + X_0X_5(f_i \wedge f_j). \label{eq:psi1 g2 34 0j}
\end{align} 

Finally, 
  \begin{eqnarray*}
 \psi\circ \delta' ( f_0\wedge f_1 \otimes g_1)&=& \psi (X_0^2f_1-X_0X_1f_0)\\
 &=& X_0^2(X_2X_{e+4}-X_5X_{e+1})-X_0X_1 (X_2X_{e+3}-X_5X_{e})\\
 &=& X_0X_2^2(X_0 X_2-X_1^2)\in {\bf I}_2(A_{3,4}),
 \end{eqnarray*} 
 and
 \begin{eqnarray*}
 \psi\circ \delta'( f_0\wedge f_1 \otimes g_2)&=& \psi X_0(X_{1}f_1-X_{2}f_0)\\
 &=& X_0X_1(X_2X_{e+4}-X_5X_{e+1})-X_0X_2(X_2X_{e+3}-X_5X_e)\\
 &=& X_0^2X_5(X_0X_2 - X_1^2)\in {\bf I}_2(A_{3,4}),
 \end{eqnarray*}
 with 
\begin{equation} \label{eq:psi1 34 01}
\psi_1(f_0 \wedge f_1 \otimes g_1) = X_0X_2^2(f_0 \wedge f_1), \q \psi_1(f_0 \wedge f_1 \otimes g_2) = X_0^2X_5(f_0 \wedge f_1).
\end{equation}

Hence, the mapping cone of $\psi$ is exact and is the resolution of $\IM{3,4}$.
\end{proof}

\begin{corollary}\label{cor:I34}
    The resolution in \Cref{thm:res of I34} is minimal.  Hence, the Betti numbers of $k[\SM{3,4}]$ are 
    $$\beta_t= t{e-2 \choose t+1}.$$
    \end{corollary}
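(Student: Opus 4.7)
The plan is to follow the template of Corollaries \ref{Betti2} and \ref{Betti3}: first verify minimality of the mapping cone constructed in \Cref{thm:res of I34}, then read off the Betti numbers from the ranks of its free modules.

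For minimality, I would argue that every entry of the full mapping-cone differential
\[
\begin{pmatrix} \delta_t & \pm\,\psi_{t-1} \\ 0 & \delta'_{t-1} \end{pmatrix}
\]
lies in the graded maximal ideal $\mathfrak{m}$ of $\RM{3,4}$. The Eagon--Northcott differentials $\delta_t$ and Buchsbaum--Rim differentials $\delta'_t$ are built from entries of $A_{3,4}$ and $A'$ (variables or products such as $X_e = X_0^2$, $X_{e+1} = X_0 X_1$, $X_{e+2} = X_0 X_2$) together with $2\times 2$ minors, and all of these are weighted-homogeneous of positive degree, hence in $\mathfrak{m}$. For $\psi_1$, the explicit formulas in \Cref{eq:psi1 34 ij,eq:psi1 g1 34 0j,eq:psi1 g2 34 0j,eq:psi1 34 01} display every non-zero entry as one of $X_2, X_5, X_0 X_2, X_0 X_5, X_{i+1}X_2, X_0 X_2^2$, or $X_0^2 X_5$, all lying in $\mathfrak{m}$. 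For the higher $\psi_i$, I would mimic the inductive argument of Corollary \ref{Betti2}: entries of $\delta_j$ and $\delta'_j$ for $j \ge 2$ are linear in the standard grading, and the chain-map identity $\delta_i \circ \psi_i = \psi_{i-1} \circ \delta'_i$ combined with a degree count forces every $\psi_i$ to be of positive standard degree; the degree-$2$ and degree-$3$ monomials inherited from $\psi_1$ do not occur in any binomial of $I_2(A_{3,4})$, so no cancellation can produce a unit entry.

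For the Betti count, the mapping cone places $E_t \oplus E'_{t-1}$ in homological degree $t$. Using $E_t = \wedge^{t+1} F_{3,4} \otimes D_{t-1} G^{\ast}$ of rank $t\binom{e-3}{t+1}$ and $E'_{t-1} = \wedge^t F_{3,4} \otimes D_{t-1} G^{\ast}$ of rank $t\binom{e-3}{t}$ for $1 \le t \le e-4$, Pascal's identity yields
\[
\beta_t = t\binom{e-3}{t+1} + t\binom{e-3}{t} = t\binom{e-2}{t+1}.
\]
At the top, only $E'_{e-4}$ contributes, with rank $e-3 = (e-3)\binom{e-2}{e-2}$, consistent with the formula.

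The hardest part will be the rigorous minimality check for $\psi_i$ with $i \ge 2$, since these maps are only determined up to homotopy and the inductive degree argument must handle the specific degree-$2$ and degree-$3$ monomials $X_0X_2, X_0X_5, X_0X_2^2, X_0^2X_5$ appearing in $\psi_1$. I expect this to parallel Corollary \ref{Betti2} with minor bookkeeping changes, the key point being that none of these monomials lies in any binomial of $I_2(A_{3,4})$ and hence cannot be absorbed to create a unit entry downstream.
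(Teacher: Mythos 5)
Your proposal is correct and follows essentially the same route as the paper: check that the explicit entries of $\psi_1$ from the displayed formulas all have positive degree, run the same inductive degree argument as in Corollary \ref{Betti2} (with the same care about the degree-$2$ entries involving $X_0$ in $\mathbf E'$ preventing any unit from appearing in higher $\psi_i$), and then read off $\beta_t = t\binom{e-3}{t+1} + t\binom{e-3}{t} = t\binom{e-2}{t+1}$ from the mapping-cone modules $E_t \oplus E'_{t-1}$. The only difference is presentational: you flag the top degree $t=e-3$ separately, which the paper absorbs into the general formula since $\binom{e-3}{e-2}=0$.
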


    \begin{proof}
    Let $\psi: \mathbf E'\to \mathbf E$ be the map of complexes as in the theorem \Cref{thm:res of I34}.  If we choose bases for $F_{3,4}$ and $G$ as above, the non-zero  entries in the matrix of $\psi_1$ are of degree at least $1$ (see  \eqref{eq:psi1 34 ij}, \eqref{eq:psi1 g1 34 0j}, \eqref{eq:psi1 g2 34 0j} and \eqref{eq:psi1 34 01}).  Now, the nonzero entries in the matrices of the resolutions $\mathbf E$ and $\mathbf E'$ from $E_2$ onwards are again of degree $1$, except when they involve $X_0$ when they can be of degree $2$.  Since the entries that involve $X_0$ in $\mathbf E'$ are also of degree $2$, there is no danger of getting $X_0^2$ as the coefficient of any basis element in the composition $\psi_{i-1} \circ \delta'$.   It follows that the nonzero entries in the matrices of $\psi_i, i\ge 1$ must all be of degree at least $1$ and hence, the resolution is minimal. 
    
    For $1 \leq t \leq e-3,$ the $t$-th module in the resolution is $E_t \oplus E'_{t-1}.$ Thus, the Betti numbers are 
    \[\beta_t = t{e-3 \choose t+1}+ t {e-3 \choose t}= t{e-2 \choose t+1}. \qedhere \]
\end{proof}

\begin{remark}  In this case all the Betti Numbers are indeed the same as that of an ideal of $2\times 2$ minors of a $2\times e-2$ matrix. Indeed, ${\bf I}_2( B_{3,4} )\subset \IM{3,4} $ is the ideal of $2\times 2$ minors of a $2\times e-2$  matrix.   However, ${\bf I}_2(B_{3,4})$ is not prime and hence, is strictly contained in $\IM{3,4}$. 
\end{remark}

\section{Trailing Betti numbers}  \label{sec:conj}

Recall that in our study of Sally type semigroups in this paper, we encounter two Gorenstein semigroups, namely $\SM{1}$ and $\SM{2,3}.$ Since Betti numbers form an important tool to study the homological properties of a ring, it is worthwhile to explore the Betti sequence of other Sally type semigroups in relation to the Betti sequence of the corresponding Gorenstein ones. In this section, we propose various conjectures on the Betti numbers of $\SM{m,n}$ as a function of $m$ and $n$. 

For the rest of this section, we fix the multiplicity $e$, and set $\beta_j(m,n):=\beta_j(\SM{m,n})$.

\begin{conjecture}
    For any $m \geq 2,$ the Betti numbers of $\SM{m}$ trail those of $\SM{1}.$ That is, 
    \[ \beta_j(1) = \beta_j(m), \ \forall \ j \leq m-2. \]
    Moreover, for $m-1 \leq j \leq e-2,$
    \[\beta_j(m) = \beta_j(1) + \binom{e-m}{j+1-m}. \]
\end{conjecture}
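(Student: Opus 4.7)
The most natural approach is to construct a minimal free resolution of $k[\SM{m}]$ as an iterated mapping cone, generalizing the constructions in Theorems~\ref{thm:res of I1} (for $\SM{1}$) and~\ref{thm:res of Im} (for $\SM{2}$). By Theorem~\ref{thm:defining ideal m}, $\IM{m} = {\bf I}_2(A_m) + J_m$, where $J_m$ is the subideal generated by the size-$2$ minors of $B_m$ containing the column $\begin{bmatrix} X_{m-1}\\ X_{m+1}\end{bmatrix}$. By Lemma~\ref{lem:height of m}, ${\bf I}_2(A_m)$ has height $e-3$, so its Eagon--Northcott complex $\mathbf{E}$ resolves $\RM{m}/{\bf I}_2(A_m)$; the ranks of $\mathbf{E}$ at position $j$ are $j\binom{e-2}{j+1}$ and are independent of $m$. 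This common ``Eagon--Northcott piece'' is the skeleton on which the mapping cone will be built.

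The trailing identity $\beta_j(m)=\beta_j(1)$ for $j\le m-2$ should emerge naturally from the construction: the generators of $J_m$ and their syzygies ought to be resolved by an auxiliary complex that is shifted to begin contributing only at homological degree $m-1$, leaving the low-degree Betti numbers governed entirely by the common piece shared with $\SM{1}$. This is already visible in Theorem~\ref{thm:first betti m}: for $m\ne 2$, $\mu(\IM{m})=\binom{e-1}{2}-1=\mu(\IM{1})$, so $\SM{m}$ and $\SM{1}$ have the same number of first syzygies.

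The main step is to identify the auxiliary complex $\mathbf{F}_m$ responsible for the additional Betti numbers $\binom{e-m}{j+1-m}$ at positions $m-1\le j\le e-2$. The shape of these ranks, namely $\binom{e-m}{k}$ for $0\le k\le e-1-m$, strongly suggests a Koszul-type complex on $e-m$ elements, homologically shifted by $m-1$. A candidate set of elements is $X_{m+1},\ldots,X_{e-1}$ together with one additional monomial element (for instance $X_0^2$ or a binomial in the lower variables $X_0,\ldots,X_{m-1}$). One would then define the first differential $\psi:F_m\to\RM{m}$, analogous to the $\psi$ maps in Theorems~\ref{thm:res of I1},~\ref{thm:res of Im},~\ref{thm:res of I23}, and~\ref{thm:res of I34}; verify that $\psi\circ\delta'$ lands in ${\bf I}_2(A_m)$, so that $\psi$ extends to a morphism of complexes; and finally establish minimality by a degree-of-entries analysis analogous to that in Corollaries~\ref{Betti1} and~\ref{Betti2}.

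The principal obstacle is the explicit construction of $\mathbf{F}_m$ and its compatible map into $\mathbf{E}$. For $m=2$ and $m=3$, the auxiliary complex is a Buchsbaum--Rim complex on a $2\times(e-2)$ matrix with specialized entries, which has a clean combinatorial description; this no longer appears sufficient when $m\ge 4$, because the $e-3$ extra generators of $J_m$ have richer syzygies, and the predicted Koszul-on-$(e-m)$-elements shape must be derived rather than guessed. A conceptually cleaner but combinatorially more demanding alternative is to compute $\beta_{j,\lambda}$ directly via Hochster's formula for every $\lambda\in\SM{m}$, extending the analysis already carried out for $\beta_1$ in Theorem~\ref{thm:first betti m}; this would also clarify the boundary behavior at $j=e-2$, where the type $e-m$ of $\SM{m}$ interacts with the residual Gorenstein contribution $\beta_{e-2}(1)=1$ and the prediction $\binom{e-m}{e-1-m}=e-m$ needs to be reconciled with the actual top Betti number.
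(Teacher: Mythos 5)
The statement you are addressing is presented in the paper only as a conjecture, with no proof supplied, so there is no argument of the authors to compare yours against; the only question is whether your proposal closes the gap, and it does not. What you have written is a plan rather than a proof: the entire mathematical content of the claim is concentrated in the auxiliary complex $\mathbf{F}_m$ and the comparison map into the Eagon--Northcott complex $\E$ of $A_m$, and you construct neither. Your heuristics are sound --- the corrections $\binom{e-m}{j+1-m}$ do have the shape of a Koszul complex on $e-m$ elements shifted to homological degree $m-1$, and your consistency checks (that $\mu(\IM{m})=\binom{e-1}{2}-1=\mu(\IM{1})$ for $m\ge 3$ matches the trailing claim at $j=1$, and that the $m=2$ case agrees with Corollary \ref{Betti2}) are correct --- but a candidate list of elements ``$X_{m+1},\dots,X_{e-1}$ together with one additional monomial'' is not a differential. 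Without explicit formulas for $\psi$ and the induced $\psi_i$, one cannot verify that $\psi\circ\delta'$ lands in ${\bf I}_2(A_m)$, that the mapping cone is exact (for $m\ge 3$ this requires knowing that the auxiliary complex actually resolves the relevant quotient, not merely that it has the desired ranks), or that no unit entries appear so that the resolution is minimal. Each of Theorems \ref{thm:res of I1} and \ref{thm:res of Im} devotes essentially its whole proof to exactly these verifications, and they are the part you have deferred.

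One point you raise in passing deserves to be made central, because it indicates the conjecture likely cannot be proved as literally stated. Since $k[\SM{m}]$ is Cohen--Macaulay of codimension $e-2$ in $\RM{m}$, the top Betti number $\beta_{e-2}(m)$ equals the Cohen--Macaulay type $t(\SM{m})=e-m$ for $m\notin\{1,e-1\}$, whereas the displayed formula at $j=e-2$ predicts $\beta_{e-2}(1)+\binom{e-m}{e-1-m}=1+(e-m)$. Already for $m=2$ this contradicts Corollary \ref{Betti2}, which gives $\beta_{e-2}(2)=e-2$ rather than $e-1$. So either the stated range should end at $j=e-3$ with the endpoint handled separately, or the additive correction must be adjusted at the top; any complete argument must resolve this discrepancy before the mapping-cone bookkeeping can be set up at all.
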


 \begin{conjecture}
Let $n\geq 6$. The Betti numbers of $\SM{2,n}$ trail the Betti numbers of $\SM{2,3}$. To be more precise,
\begin{align*}
    \beta_j(2,n) & =\beta_j(2,3), \q \forall \; j\leq n-5, \\
    \text{ and } \q \beta_{n-4}(2,n) & =\beta_{n-4}(2,3)+1.
\end{align*}
Moreover, one can say that $\SM{2,e-1}$ is the closest to being Gorenstein in the given family of rings. We have the following observation,
     \begin{equation*}
         \beta_j(2,e-1) = \begin{cases}
             \beta_j(2,3), & j \leq e-6 \\
             \beta_j(2,3)+1, & j=e-5 \\
             \beta_j(2,3)+3, & j=e-4 \\
             \beta_j(2,3)+2=3, & j=e-3.
         \end{cases}
     \end{equation*}
\end{conjecture}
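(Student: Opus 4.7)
My plan is to construct an explicit minimal free resolution of $\IM{2,n}$ by generalizing the mapping-cone construction from \Cref{thm:res of I23}. The first step is to write $\IM{2,n} = {\bf I}_2(A_{2,n}) + {\bf I}_2(B_{2,n})$ for suitable matrices: $A_{2,n}$ should be a $2 \times (e-3)$ matrix whose columns are of consecutive-index type (with the $X_2,X_n$ indices removed) and whose last column is $(X_{e-1},X_e)^T$ with the identification $X_e = X_0^2$, and $B_{2,n}$ should record the binomials involving $X_{n-1}$ and $X_{n+1}$ analogous to the $(2,3)$ case. Since $A_{2,n}$ has the same combinatorial shape as $A_{2,3}$, their Eagon-Northcott complexes are isomorphic as $\mathbb{Z}$-graded complexes up to the semigroup degree shift, and by \Cref{lem:height of m and m+1} (suitably adapted) both have height $e-4$.

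Next, I would build the resolution as an iterated mapping cone: first pair the Eagon-Northcott complex of $A_{2,n}$ with its dual, exactly as in \Cref{thm:res of I23}, then attach a subcomplex resolving the extra generators supplied by $B_{2,n}$. The crucial observation is that these extra generators have internal semigroup degree large enough to force them out of the first $n-5$ homological positions: each extra relation lives in degree at least $2e + n + c$ for some small $c \geq 0$, and the Koszul-type syzygy chain it induces only reaches the bottom of the resolution at step $n-4$. Consequently, for $j \leq n-5$, the mapping cone from \Cref{thm:res of I23} is intact and gives $\beta_j(2,n) = \beta_j(2,3)$, while at step $n-4$ exactly one new generator appears, producing the $+1$. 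The count can be made precise by a strand-by-strand application of Hochster's formula, following the approach used in \Cref{thm:first betti m}.

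The main obstacle will be proving minimality of the connecting maps uniformly in $n$: the entries of the various $\psi_i$ in the mapping cone must remain inside the graded maximal ideal of $\RM{2,n}$, and this requires tracking the degrees of every binomial in $\IM{2,n}$ and ruling out accidental cancellations when monomials of different syntactic lengths coincide in the semigroup grading (a phenomenon that already complicates \Cref{Betti2} and \Cref{cor:I34}). The case $\SM{2,e-1}$ must be handled separately, since the width of the semigroup drops and the identifications $X_e = X_0^2$ together with the absence of $X_{e-1}$-multiples in $\SM{2,e-1}$ produce additional tail-end syzygies; these should be enumerated by direct analysis of the last three syzygy modules to confirm the $+1, +3, +2$ increments at positions $e-5, e-4, e-3$ respectively.
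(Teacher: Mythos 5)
The statement you are addressing is posed in the paper as a \emph{conjecture} in Section \ref{sec:conj}; the paper supplies no proof of it, so there is no argument of the authors to compare yours against. Judged on its own terms, your proposal is a plausible strategy outline but not a proof, and it contains one genuine conceptual error. You argue that the extra generators coming from ${\bf I}_2(B_{2,n})$ ``have internal semigroup degree large enough to force them out of the first $n-5$ homological positions,'' so that they ``only reach the bottom of the resolution at step $n-4$.'' This conflates internal (semigroup) degree with homological degree: a minimal generator of $\IM{2,n}$ contributes to $\beta_1$ no matter how large its degree is, and high internal degree does not push an element to a high homological position. Since the conjecture asserts $\beta_1(2,n)=\beta_1(2,3)$ for all $n\ge 6$, what must actually be shown is that the \emph{number} of minimal generators is independent of $n$ in this range, and that the two resolutions first diverge by a single extra syzygy exactly in homological degree $n-4$; your degree heuristic does not locate that divergence, and the strand-by-strand Hochster computation that would is only mentioned, not carried out.

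The second gap is that the central object --- the complex resolving the contribution of ${\bf I}_2(B_{2,n})$ together with the comparison maps gluing it to the Eagon--Northcott mapping cone --- is never constructed. For $n=3$ the resolution is self-dual because $\SM{2,3}$ is Gorenstein; for $n\ge 6$ the type of $\SM{2,n}$ is $e-n+2>1$ by Theorem \ref{thm:type m,n}, so the resolution cannot be the mapping cone of $\mathbf{E}^\ast\to\mathbf{E}$ alone, and the correction term is precisely where all the difficulty lies. Until that complex, its exactness, and the minimality of the connecting maps $\psi_i$ are exhibited uniformly in $n$ (and the tail $j>n-4$, which the conjecture deliberately leaves open, is controlled), the proposal remains a plan rather than a proof. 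One consistency check worth recording: for $n=e-1$ the conjectured terminal value $\beta_{e-3}(2,e-1)=3$ agrees with the Cohen--Macaulay type $e-(e-1)+2=3$ from Theorem \ref{thm:type m,n}, which is evidence for the conjecture but not a step toward proving it.
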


In Section \ref{sec:3,4}, we studied the Betti sequence of $\SM{3,4}.$ From \cite[Corollary 4.6]{DGS3} recall that $(2,4)$ and $(2,5)$ are also special cases.
\begin{conjecture}
The Betti numbers of $\SM{2,4}$ and $\SM{2,5}$ satisfy the same formula as those of $\SM{3,4}$. 
\[ \beta_j(2,4) = \beta_j(2,5) = j {e-2 \choose j+1}, \ \forall j. \]
\end{conjecture}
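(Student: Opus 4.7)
The plan is to mirror the construction of \Cref{thm:res of I34}. First, using the minimal generating set of $\IM{2,n}$ from \cite{DGS3}, I would identify matrices $A_{2,n}$ and $B_{2,n}$ such that $\IM{2,n} = {\bf I}_2(A_{2,n}) + {\bf I}_2(B_{2,n})$, where $A_{2,n}$ is $2 \times (e-3)$ of the ``near generic'' shape used above (with the quadratic substitutions $X_e=X_0^2$, etc., plugged into the last few columns), while $B_{2,n}$ collects the remaining minimal generators as $2\times 2$ minors of a second $2 \times (e-3)$ matrix whose ``second row'' is shifted by three instead of one. An analog of \Cref{lem:height of m and m+1} — establishing that ${\bf I}_2(A_{2,n})$ has height $e-4$ — would need to be proved by the same specialization argument: set $X_{n-1}-X_{n+1}=0$ to reduce to an arithmetic-sequence matrix of the form $Y_u$.

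Next, I would construct the Eagon-Northcott complex $\mathbf E$ from $A_{2,n}$ and a Buchsbaum-Rim complex $\mathbf E'$ from a slightly augmented matrix $A'$, so that the ``extra'' generators coming from ${\bf I}_2(B_{2,n})$ appear at the $F_{2,n}$-spot of $\mathbf E'$. The heart of the proof is to define an explicit $\psi: F_{2,n} \to \RM{2,n}$ whose image records those extra binomials (with the analogs of the $X_2, X_5$ coefficients from $\SM{3,4}$ replaced by $X_1$ and $X_n$), and to verify that $\psi \circ \delta'_1 \in {\bf I}_2(A_{2,n})$ on each basis vector $f_i \wedge f_j \otimes g_\ell$. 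This is a direct but delicate binomial calculation: the image on each basis vector is rewritten as a $\RM{2,n}$-combination of $2 \times 2$ minors of $A_{2,n}$, producing explicit formulas for $\psi_1$ analogous to equations \eqref{eq:psi1 34 ij}--\eqref{eq:psi1 34 01}.

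Once $\psi$ lifts to a map of complexes, the mapping cone is a free resolution of $\IM{2,n}$. Minimality follows the pattern of \Cref{cor:I34}: the entries of $\psi_1$ have degree at least one, the linear entries of the interior differentials of $\mathbf E$ and $\mathbf E'$ force the higher $\psi_i$ to have entries of degree at least one, and any potential degree-two entries are monomials that do not appear in any binomial of ${\bf I}_2(A_{2,n})$. Counting ranks at the $t$-th spot then gives
\[ \beta_t = t\binom{e-3}{t+1} + t\binom{e-3}{t} = t\binom{e-2}{t+1}. \]

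The main obstacle will be the non-consecutive nature of $(2,4)$ and especially $(2,5)$: the gap between $m$ and $n$ is greater than one, so the matrix $A_{2,n}$ cannot be a straightforward Hankel-type matrix, and the binomials from $B_{2,n}$ involve ``skip'' relations that complicate both the verification that $\psi \circ \delta'_1 \in {\bf I}_2(A_{2,n})$ and the minimality argument (one has to rule out several potential cancellations that could decrease $\beta_t$). A useful preliminary step is to handle small $e$ by computer, both to pin down the correct shape of $A_{2,n}, B_{2,n}$, and $\psi$, and to confirm the Betti table before attempting the general lift. A secondary subtlety is that $\SM{2,4}$ and $\SM{2,5}$ are not Gorenstein, so unlike \Cref{thm:res of I23} one cannot hope that $\mathbf E^*$ itself plays the role of $\mathbf E'$; one genuinely needs a Buchsbaum-Rim complex as in the $\SM{3,4}$ case.
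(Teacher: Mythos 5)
This statement is one of the open conjectures in Section \ref{sec:conj}; the paper gives no proof of it, so there is nothing in the text to compare your argument against. What you have written is a research plan rather than a proof: every load-bearing step --- the explicit form of $A_{2,n}$ and $B_{2,n}$, the height computation, the definition of $\psi$, the verification that $\psi\circ\delta_1'$ lands in ${\bf I}_2(A_{2,n})$, and the minimality argument --- is deferred. As a plan it is sensible and correctly modeled on \Cref{thm:res of I34}, and you are right that a Buchsbaum--Rim complex (not the dual Eagon--Northcott complex) is needed since $\SM{2,4}$ and $\SM{2,5}$ are not Gorenstein; the predicted rank count $t\binom{e-3}{t+1}+t\binom{e-3}{t}=t\binom{e-2}{t+1}$ is also consistent with $\mu(\IM{2,4})=\mu(\IM{2,5})=\binom{e-2}{2}$ from \cite[Corollary 4.6]{DGS3}.

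The genuine gap you should confront first is the one you flag only in passing: for $(m,n)=(2,4)$ the consecutive-difference columns $\begin{bmatrix} X_i \\ X_{i+1}\end{bmatrix}$ with $i,i+1\in[0,e-1]\setminus\{2,4\}$ number only $e-4$, not $e-3$ (only $i=0$ and $i\in[5,e-1]$ survive), and for $(2,5)$ the surviving indices are $i\in\{0\}\cup\{3\}\cup[6,e-1]$, again $e-4$. So unlike $A_{3,4}$, where the two excluded generators are adjacent and one still gets $e-3$ consecutive columns, here no $2\times(e-3)$ matrix of the required shape exists without inserting a column built from quadratic substitutions or a non-unit row shift; until that matrix is pinned down, neither the height lemma (the specialization $X_{n-1}-X_{n+1}$ no longer obviously reduces to $Y_u$) nor the lift of $\psi$ can even be formulated. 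Your suggestion to first compute small $e$ by machine is the right next step, but as it stands the argument does not establish the conjecture.
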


\begin{conjecture}
    For any $m \geq 4,$ the Betti numbers of $\SM{m,m+1}$ and $\SM{m+1,m+2}$ are the same for $\beta_1,\beta_2,\ldots,\beta_{m-3}.$ In other words, $$\beta_j(n,n+1)=\beta_j(m,m+1), \ \forall \ n\geq m \text{ and } \ \forall j \leq m-3.$$
\end{conjecture}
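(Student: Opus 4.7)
My approach would be to establish the conjecture via an iterated mapping-cone construction of the minimal free resolution of $\IM{m,m+1}$, extending the techniques developed for the cases $m=2,3$ in Theorems \ref{thm:res of Im}, \ref{thm:res of I23}, and \ref{thm:res of I34}.

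The first step is to decompose $\IM{m,m+1}$ by the degrees of its minimal generators. The lowest-degree generators are binomials $X_iX_j - X_kX_l$ with $i+j=k+l$ and $i,j,k,l \in [0,m-1]$; these arise from the $2$-minors of the ``universal'' Hankel matrix on the variables $X_0, X_1, \ldots, X_{m-1}$, and are common to both $\IM{m,m+1}$ and $\IM{m+1,m+2}$. At intermediate degrees we encounter ``boundary'' generators crossing the index gap at $\{m,m+1\}$ (resp.\ $\{m+1,m+2\}$), and at the top degrees the relations involve the pseudo-variables $X_e=X_0^2$, $X_{e+1}=X_0X_1$, etc.\ that replace the missing generators. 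Call the resulting filtration of subideals $J_0 \subset J_1 \subset \cdots \subset J_N = \IM{m,m+1}$.

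I would then build a minimal free resolution of $\IM{m,m+1}$ as an iterated mapping cone: the ``base'' is the Eagon--Northcott complex of the Hankel matrix resolving $J_0$, with subsequent layers obtained by mapping-cones with Buchsbaum--Rim-type complexes for the additional generators in each $J_\ell \setminus J_{\ell-1}$, paralleling the structure of Theorem \ref{thm:res of I34}. The central structural claim is that for $t \leq m-3$, the $t$-th free module in the minimal resolution of $\IM{m,m+1}$ depends only on the universal subideal $J_0$; the boundary and top layers contribute only starting in homological degree $m-2$. Granting this, the first $m-3$ Betti numbers of $\IM{m,m+1}$ coincide with those of $J_0$, a subideal that is isomorphic to the analogous subideal of $\IM{m+1,m+2}$ in the common range, which yields $\beta_j(m,m+1) = \beta_j(m+1,m+2)$ for $j \leq m-3$.

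The main obstacle is verifying this key claim. It requires a careful analysis of how the boundary generators --- those involving both indices below $m$ and indices above $m+1$ --- interact syzygetically with the universal Hankel generators, showing that their combined syzygies occupy homological degrees $\geq m-2$. A direct approach is through degree bounds on the syzygies of the intermediate $J_\ell$, iterated through the filtration and leveraging the degrees of the newly added generators (which grow as one moves past the gap). An alternative via Hochster's formula runs into the difficulty that per-$\lambda$ Betti contributions $\beta_{j,\lambda}$ generally differ between $\SM{m,m+1}$ and $\SM{m+1,m+2}$ (as small explicit examples verify), so matching the totals would require a delicate cancellation argument summed over all multidegrees.
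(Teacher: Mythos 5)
This statement appears in the paper only as a conjecture (Section \ref{sec:conj}); the authors give no proof, so there is nothing to compare your argument against, and the question is whether your proposal actually closes the gap. It does not. Your ``central structural claim'' --- that in an iterated mapping-cone resolution of $\IM{m,m+1}$ the boundary and top layers contribute only from homological degree $m-2$ on --- is precisely the conjecture restated in the language of a hypothetical filtration, and you leave it unverified; the degree-bound strategy you gesture at is not carried out, and nothing in the paper's resolved cases ($\IM{2}$, $\IM{2,3}$, $\IM{3,4}$, where the mapping cone has only two layers) supplies the needed control over the syzygies of the intermediate ideals $J_\ell$. Minimality of the iterated cone is also never addressed, and in every case the paper does handle (Corollaries \ref{Betti1}, \ref{Betti2}, \ref{Betti3}, \ref{cor:I34}) minimality requires a separate, delicate degree argument on the comparison maps $\psi_i$.

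There is a second, more structural problem: even granting the key claim, your conclusion does not follow as stated. The ``universal'' subideal $J_0$ of $\IM{m,m+1}$ is generated by the Hankel relations among $X_0,\dots,X_{m-1}$, whereas the analogous subideal of $\IM{m+1,m+2}$ involves the additional variable $X_m$; these ideals have $\binom{m}{2}$ and $\binom{m+1}{2}$ quadric generators respectively, so they are not isomorphic and their Betti numbers already differ at $\beta_1$. Thus ``$\beta_j(\IM{m,m+1})=\beta_j(J_0)$ for $j\le m-3$'' cannot be combined with ``the two $J_0$'s are isomorphic'' to yield the claimed equality; what you would actually need is a stabilization statement for the low homological degrees of the resolutions of these growing Hankel ideals together with the non-interference of the boundary generators --- which is, again, essentially the conjecture itself. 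Your closing remark that a Hochster-formula approach would require cancellation across multidegrees is a fair observation, but it identifies an obstacle rather than overcoming it. In short, the proposal is a reasonable research plan, not a proof, and the statement remains open.
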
 

Another peculiar case is the one with $m=1.$ Note that when $n=e-1,$ the width of the semigroup reduces to $e-2.$ 
\begin{conjecture} 
We first conjecture a formula for the Betti numbers for $\SM{1,e-1}.$ For $0 \leq j \leq e-5,$
\begin{align*}
\beta_j(1,e-1) &=j {e-3 \choose j+1} + (e-4-j){e-3 \choose e-2-j}, \\
\beta_{e-4}(1,e-1) &=j {e-3 \choose j+1} + (e-3-j){e-3 \choose j} = (e-4) + (e-3) = 2e-7, \\
\beta_{e-3}(1,e-1) &=j {e-3 \choose j+1} + (e-1-j){e-3 \choose j} = 2.
\end{align*}
For $m\geq 3,$ the first $m-2$ Betti numbers of $\SM{m,e-1}$ are exactly the same as the Betti numbers of $\SM{1,e-1}.$ That is, 
$$\beta_j(1,e-1)=\beta_j(m,e-1), \q \forall \ j \leq m-2.$$
\end{conjecture}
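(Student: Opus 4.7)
The plan is to handle the two parts of the conjecture in sequence: first the closed formula for $\beta_j(\SM{1,e-1})$, then the trailing equality $\beta_j(1,e-1) = \beta_j(m,e-1)$ for $m \geq 3$ and $j \leq m-2$.

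For the formula, I would follow the blueprint of Section \ref{sec:drop1}. Using Hochster's formula on the simplicial complexes $\Delta_\lambda$ associated to $\SM{1,e-1}$, first verify $\beta_1(\SM{1,e-1}) = \binom{e-3}{2}+(e-5)$ and identify an explicit minimal generating set of $\IM{1,e-1}$, presenting it in the form ${\bf I}_2(A_{1,e-1}) + {\bf I}_2(B_{1,e-1})$ for suitable matrices built on the variables $X_0, X_2, X_3, \ldots, X_{e-2}$ (in the spirit of Theorem \ref{thm:defining ideal m}). The shape of the conjectured Betti numbers then suggests that the minimal free resolution arises as the mapping cone of a chain map $\psi : \mathbf{E}' \to \mathbf{E}$, where $\mathbf{E}$ is the Eagon--Northcott complex of $A_{1,e-1}$, providing the main term $j\binom{e-3}{j+1}$, and $\mathbf{E}'$ is a companion complex providing the remaining contributions.

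Since $t(\SM{1,e-1}) = 2$, the companion $\mathbf{E}'$ cannot be the dual $\mathbf{E}^\ast$ (which would force type $1$, as in Theorem \ref{thm:res of I1}). Tabulating its ranks from the conjectured Betti formula instead points to a shifted dual Eagon--Northcott or Buchsbaum--Rim-like complex in the spirit of Theorem \ref{thm:res of Im}, whose terminal module has rank $2$. Once $\mathbf{E}'$ and $\psi$ are pinned down, exactness of the mapping cone and minimality would follow from the usual degree-tracking arguments of Corollaries \ref{Betti1}, \ref{Betti2}, and \ref{Betti3}.

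For the trailing equality, I would proceed by induction on $m \geq 3$ and compare the simplicial complexes $\Delta_\lambda$ of $\SM{m,e-1}$ and $\SM{1,e-1}$. Since the two semigroups agree away from a local perturbation near $e+m$, the local contributions $\beta_{j,\lambda}$ coincide whenever $\lambda$ lies below a threshold that grows with $m$. Following the argument structure of Theorem \ref{thm:first betti m}, one then shows that the discrepancies affect only $\beta_j$ with $j \geq m-1$, leaving the first $m-2$ Betti numbers unchanged.

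The main obstacle will be pinpointing the correct companion complex $\mathbf{E}'$ and the chain map $\psi$: the type-$2$ behaviour distinguishes this case from every resolution constructed earlier in the paper. I would recommend first verifying the conjecture computationally for $e = 6, 7, 8$ to confirm the Betti tables, as the combinatorial patterns should suggest the explicit module structure of $\mathbf{E}'$ and the degrees of the entries in $\psi_i$, enabling the general construction.
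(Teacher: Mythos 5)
This statement is presented in the paper as a \emph{conjecture}: the authors offer no proof of it, so there is no argument of theirs to compare yours against. More importantly, what you have written is not a proof either --- it is a research plan in which every load-bearing step is deferred. You do not exhibit the matrices $A_{1,e-1}$ and $B_{1,e-1}$, you do not verify the claimed value of $\beta_1(\SM{1,e-1})$ via Hochster's formula, you do not identify the companion complex $\mathbf E'$ or the chain map $\psi$ (indeed you explicitly flag this as ``the main obstacle''), and you do not carry out the exactness or minimality arguments for the mapping cone. The final paragraph recommending computational verification for $e=6,7,8$ confirms that the conjecture remains unestablished at the end of your write-up.

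That said, the strategy you outline is sensible and faithful to the paper's methods, and one observation in it is genuinely useful: since $t(\SM{1,e-1})=2$ (by \Cref{thm:type m,n}), the last free module in a minimal resolution must have rank $2$, so the companion complex cannot be the dual $\E^\ast$ as in \Cref{thm:res of I1}; it must terminate in a rank-$2$ module, consistent with the conjectured $\beta_{e-3}(1,e-1)=2$ and pointing toward a Buchsbaum--Rim-type construction as in \Cref{thm:res of Im}. Likewise, your plan for the trailing equality --- comparing the complexes $\D_\l$ for $\SM{m,e-1}$ and $\SM{1,e-1}$ and localizing the discrepancy to homological degrees $j\ge m-1$ --- is the natural route, but as written it is only an assertion that such a threshold exists, with no argument for why the perturbation at $e+m$ cannot disturb $\beta_{j,\l}$ for small $j$. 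Until the complex $\mathbf E'$, the map $\psi$, and the threshold argument are made explicit and verified, the statement remains exactly what the paper says it is: a conjecture.
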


\begin{acknowledgment*}{\rm
Hema Srinivasan is supported by grants from Simons Foundation. We are grateful to the software systems Singular \cite{Singular}, Macaulay2 \cite{M2}, and GAP \cite{NumericalSgps} for serving as an excellent source of inspiration.
}\end{acknowledgment*}

\printbibliography
\end{document}